\documentclass[11pt]{amsart}
\usepackage[margin=1.1in]{geometry}
\usepackage{amssymb,mathtools}
\usepackage[colorlinks=true,linkcolor=blue,citecolor=blue]{hyperref}

\newtheorem{theorem}{Theorem}[section]
\newtheorem{proposition}[theorem]{Proposition}
\newtheorem{lemma}[theorem]{Lemma}

\theoremstyle{definition}
\newtheorem{definition}[theorem]{Definition}

\usepackage{xcolor}

\newcommand{\R}{\mathbb{R}}
\newcommand{\Om}{\Omega}
\newcommand{\Lou}{\mathcal{L}}
\DeclareMathOperator{\dist}{dist}

\title[Concavity of the Ornstein--Uhlenbeck torsion function]{Concavity of the Ornstein--Uhlenbeck torsion function\\
on centered ellipses and its failure on convex domains}

\author{Xuan Hien Nguyen}
\address{Department of Mathematics, Iowa State University, Ames, IA 50011, USA}
\email{xhnguyen@iastate.edu}
\thanks{X. H. Nguyen is partially supported by the Dio Lewis Holl Fund.}

\author{Alina Stancu}
\address{Department of Mathematics and Statistics, Concordia University, Montreal, QC, Canada}
\email{alina.stancu@concordia.ca}
\thanks{A. Stancu is partially supported by the NSERC Discovery grant 2023-03926.}

\subjclass[2020]{35J25, 35B50}
\keywords{Ornstein--Uhlenbeck operator, torsion function, concavity, convex domains}
\date{\today}

\begin{document}

\begin{abstract}
Let $u$ be the torsion function of a bounded domain $\Omega$ in $\mathbb{R}^n$
for the Ornstein--Uhlenbeck operator, that is, the solution of
$-\Delta u + x\cdot\nabla u = 1$ in $\Omega$ with $u=0$ on the boundary. We prove that for every $n\ge2$ there are
bounded, centrally symmetric domains whose torsion function is not concave. These domains have boundary with a flat piece and can be smoothed so that the boundary is $C^\infty$, strictly
convex with principal curvatures bounded below by a positive constant, while keeping the torsion function not concave. In contrast, the torsion function for every ellipse
centered at the origin is concave, and
for every ball centered at the origin we prove the sharp bound $D^2u\leq-\frac{1}{n}I$.  The
ellipse result follows by deformation from the disk, using a boundary
flux estimate and third-order boundary identities to exclude a first
zero eigenvalue of the Hessian.

\end{abstract}

\maketitle

\section{Introduction}\label{sec:intro}

Let $\gamma$ be the standard Gaussian measure on $\R^n$,
$d\gamma=(2\pi)^{-n/2}e^{-|x|^2/2}\,dx$, and let
\[
\Lou u=\Delta u-x\cdot\nabla u
\]
be the Ornstein--Uhlenbeck operator, so that
$\int\nabla u\cdot\nabla v\,d\gamma=\int(-\Lou u)\,v\,d\gamma$ for smooth
$u$ and compactly supported $v$.  For a bounded open set
$\Omega\subset\R^n$ the torsion function of $\Omega$ is the solution
$u=u_\Omega$ of
\begin{equation}\label{eq:torsion}
-\Lou u=1\ \text{ in }\Omega,\qquad u=0\ \text{ on }\partial\Omega.
\end{equation}

For the Laplacian in place of $\Lou$, the torsion function of a convex
domain need not be concave (see the equilateral triangle in
\cite[\S3]{KM93}), but $\sqrt u$ is concave (Makar-Limanov \cite{ML71} for
$n=2$, Kennington \cite{Ken85} for all $n$), and Kosmodem'yanskii \cite{Kos87} provided a curvature
condition on $\partial\Omega$ sufficient for concavity of $u$ itself.
For $\Lou$, related work on the log-concavity of the first eigenfunction and
of solutions of elliptic and parabolic problems includes
\cite{CFLS24,CQSell,CQS26,Qin25}.  In the companion paper \cite{NgBM}, we show
that the Brunn-Minkowski-type inequality  with exponent $1/3$ for the Gaussian
torsional rigidity conjectured by Mar\'in Sola and Salerno \cite{MSS26}
fails. The present paper is about pointwise concavity of the torsion function. We prove that convexity and central symmetry
of $\Omega$ do not ensure that $u$ is concave, while $u$ is concave on
every ellipse centered at the origin.\\

We say that a bounded domain is uniformly convex of class $C^\infty$ if
its boundary is a $C^\infty$ hypersurface whose principal curvatures are
bounded below by a positive constant.

\begin{theorem}\label{thm:main-nonconcave}
For every $n\ge2$, there exist bounded, centrally symmetric domains
$\Omega\subset\R^n$, uniformly convex of class $C^\infty$, whose torsion
function is not concave.
\end{theorem}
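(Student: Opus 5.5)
The plan is to first build a centrally symmetric convex domain whose boundary has a flat piece and whose torsion function fails to be concave at some interior point. Then we smooth and uniformly convexify it, using stability of $D^2u$ under domain perturbation.

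\textbf{The model domain.} Take a flat piece located far from the origin, where the drift $x\cdot\nabla u$ is strong. For $n=2$, a natural candidate is the slab-like region
\[
\Om_{L,a}=\{|x_1|<a,\ |x_2|<L\},
\]
capped symmetrically, with $a$ large and $L$ large.
\begin{itemize}
\item In the long-thin limit ($L\to\infty$), $u$ tends locally uniformly in $C^2$ away from the caps to the one-dimensional torsion function $v(x_1)$ on $(-a,a)$. This $v$ solves $-v''+x_1v'=1$ with $v(\pm a)=0$.
\item Its concavity can be checked explicitly: $v'(t)=-e^{t^2/2}\int_0^t e^{-s^2/2}\,ds$ and $v''=tv'-1<0$. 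So the one-dimensional profile is concave, and this choice alone will not work.
\end{itemize}
The failure must therefore come from the direction parallel to the flat piece. The idea is to put the flat face at distance $d$ from the origin, e.g.\ the face $\{x_1=d\}$ of the symmetric box $\{|x_1|<d,\ |x_2|<\ell\}$ with $d$ large and $\ell$ moderate.
\begin{itemize}
\item Near the face, at a boundary point $p=(d,y)$ with $u=0$, one has $u_{22}=0$ along the flat part.
\item Differentiating the equation twice in $x_2$ gives $-\Lou u_{22}+2u_{22}=0$, i.e.\ $\Delta u_{22}-x\cdot\nabla u_{22}-2u_{22}=0$.
\item Concavity would force $u_{22}\le 0$ inside, and hence, by the Hopf lemma, $\partial_{\nu}u_{22}>0$ outward, which amounts to a sign on $\partial_1 u_{22}$ along the face.
\end{itemize}

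\textbf{The boundary identity.} On the face, $u=0$ and the equation give $-u_{11}+d\,u_1=1$. Differentiating twice tangentially and using $u_{22}=0$, we get $u_{1122}=d\,u_{122}$. Combining this with the equation for $u_{22}$ should yield a contradiction with the sign of $\partial_\nu u_{22}$ once the flux $-u_1$ varies along the face in a suitable convex or concave way in $x_2$.

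Concretely, $g(y)=-u_1(d,y)$ has $g''=-u_{122}$. Concavity of $u$ requires $u_{122}\ge 0$ on the face, i.e.\ $g$ concave. I would then show that for $d$ large, $g$ is \emph{not} concave near the ends of the face. The heuristic is that for large $d$ the solution near the face behaves like $1/d$ times a boundary-layer profile, $u\approx (d-x_1)/d$ in a layer of width $1/d$, with corrections governed by the tangential drift $x_2\partial_2$. This produces a flux $g(y)$ that increases with $|y|$ (the drift pushes mass outward), so $g$ is convex at the middle. An asymptotic expansion in $1/d$ computing $g''(0)$ to leading order is the \textbf{main obstacle}. I would attempt it with a barrier/comparison argument: compare $u$ with $w(x)=\frac{1}{x_1}\bigl(1-e^{-x_1(d-x_1)}\bigr)$-type explicit sub- and supersolutions, in order to extract the sign of $u_{22}$ at an interior point near $(d-\delta,0)$ directly. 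In higher dimensions, take the product-type domain $\{|x_1|<d,\ |x'|<\ell\}$ (rounded), with the same analysis in $x_2$.

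\textbf{Smoothing.} Strict inequality $u_{22}(p_0)>0$ at an interior point $p_0$ is an open condition. Approximate $\Om$ from inside by centrally symmetric, uniformly convex $C^\infty$ domains $\Om_k\to\Om$ in Hausdorff distance, e.g.\ level sets of a symmetric mollified gauge function plus $\varepsilon|x|^2$. The torsion functions $u_k\to u$ locally uniformly by the maximum principle and comparison, since $u_{\Om}$ is monotone in $\Om$ and boundary continuity holds by convexity via barriers. Interior Schauder estimates then upgrade this to $C^2$ convergence near $p_0$, so $D^2u_k(p_0)$ still has a positive eigenvalue for large $k$.
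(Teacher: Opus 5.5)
Your smoothing step is fine and is essentially what the paper does: Hausdorff approximation by centrally symmetric $C^\infty_+$ bodies, plus $C^2_{\mathrm{loc}}$ stability of the torsion function. The gap is in the central step, producing a flat-faced convex domain whose torsion function is not concave, which is not established. You reduce it to showing that the flux $g(y)=-u_1(d,y)$ is not concave on the face when $d$ is large, and you leave this as an unproved asymptotic claim that you yourself flag as the main obstacle.

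The heuristic you offer is inconsistent: it predicts non-concavity ``near the ends'' and then convexity ``at the middle''. At the ends it appears to point the wrong way. At the right-angle corners of the box $g\to0$, and the local corner expansion gives $g(t)\approx\frac{2}{\pi}\,t\log(1/t)$ in the distance $t$ to the corner, which is concave. In the middle, the sign of $g''$ depends on the global solution, and nothing you wrote controls it. The condition ``$g$ concave'' that you extract from concavity of $u$ is too weak a necessary condition to reliably yield a contradiction.

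The missing idea is to use the full negative semidefiniteness of $D^2u$ at points of the face, not only the sign of $u_{22}$ inside. Since $u\in C^\infty(\Omega\cup F)$ and $u_{22}=0$ on $F$, the restriction of $D^2u$ to $\operatorname{span}\{e_1,e_2\}$ at a point of $F$ is $\begin{pmatrix}u_{11}&u_{12}\\ u_{12}&0\end{pmatrix}$, whose determinant is $-u_{12}^2$. So concavity forces $u_{12}\equiv0$ on $F$: $g$ must be constant, not merely concave.

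The paper rules this out by unique continuation. With constant flux, the Cauchy data of $u$ on $F$ agree with those of a one-dimensional solution $\widetilde w(x_1)$, so $u=\widetilde w(x_1)$ on all of $\Omega$. That is impossible, since $u$ would then be a positive constant on a slice $\Omega\cap\{x_1=c\}$ while vanishing at its relative boundary.

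Alternatively, your own Hopf observation finishes it at once. From $u_{12}\equiv0$ on $F$ you get $u_{122}=\partial_2u_{12}=0$ on $F$, contradicting $\partial_1u_{22}>0$. Either way, the argument works for every bounded convex domain with a flat face, and no large $d$ or asymptotic expansion is needed. The paper uses the rounded slab $Q+\overline{B_{R_c}}$ because its Minkowski structure makes the uniformly convex approximation easy.
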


The domains are first constructed with a flat boundary face. On such a
face the normal derivative of $u$ cannot be constant, and its variation
makes a $2\times2$ principal submatrix of $D^2u$ indefinite
(Lemma~\ref{lem:face}), so $D^2u$ has a positive eigenvalue at some
interior point.  Since the torsion function depends continuously on the
domain in $C^2_{\mathrm{loc}}$, the positive eigenvalue at that point
persists under smooth approximation of the domain
(Lemmas~\ref{lem:approx} and~\ref{lem:stability}). We do not know
whether $\sqrt u$ or $\log u$ is concave on every bounded convex domain.
Finite-difference computations on planar stadiums, the domains with a
flat face used in Section~\ref{sec:nonconcave} for $n=2$, suggest that
$\sqrt u$ is concave there.

In the positive direction, concavity holds on balls and ellipses
centered at the origin.

\begin{theorem}\label{thm:main-concave}
Let $n\ge1$.
\begin{enumerate}
\item[(a)] For every $R>0$, the torsion function $u$ of the ball
$B_R=B_R(0)\subset\R^n$ centered at the origin satisfies
$D^2u\leq -\frac1n I$ on $\overline{B_R}$.
\item[(b)] The torsion function of every ellipse
centered at the origin satisfies $D^2u<0$ on the closed ellipse. In
particular, it is concave.
\end{enumerate}
\end{theorem}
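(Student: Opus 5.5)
For part (a) I would use radial symmetry. By uniqueness the torsion function is radial, $u(x)=\phi(r)$ with $r=|x|$, and the equation becomes
\[
\phi''+\Big(\frac{n-1}{r}-r\Big)\phi'=-1 .
\]
Multiplying by the integrating factor $r^{n-1}e^{-r^2/2}$ and using regularity at the origin gives
\[
\phi'(r)=-r^{1-n}e^{r^2/2}\int_0^r s^{n-1}e^{-s^2/2}\,ds .
\]
The Hessian of a radial function has eigenvalue $\phi''$ in the radial direction and $\phi'/r$ (multiplicity $n-1$) in the tangential directions. It therefore suffices to show $\phi'/r\le -1/n$ and $\phi''\le -1/n$ on $[0,R]$.

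Set $g(r)=-\phi'(r)/r=r^{-n}e^{r^2/2}\int_0^r s^{n-1}e^{-s^2/2}ds$. Substituting $s=rt$ gives
\[
g(r)=\int_0^1 t^{n-1}e^{r^2(1-t^2)/2}\,dt .
\]
This is increasing in $r$, and $g(0)=1/n$. Hence $\phi'/r\le-1/n$, with equality only at $r=0$.

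For the radial eigenvalue, the ODE gives $\phi''=-1+(n-1)g-r\phi'=-1+(n-1)g+r^2g$. Differentiating under the integral, $rg'(r)=\int_0^1 r^2(1-t^2)t^{n-1}e^{r^2(1-t^2)/2}dt$, and one checks that $\phi''=-g-rg'$. Both terms are nonnegative in $g$ and $rg'$, so $\phi''\le -g\le -1/n$. This gives $D^2u\le-\frac1nI$ on $\overline{B_R}$ for every $n\ge1$.

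For part (b) I would use a continuity argument, following the abstract's description. Parametrize the centered ellipses $E_t=\{x^2/a_t^2+y^2/b_t^2<1\}$ with $a_t=a^t$ and $b_t=b^t$ after scaling suitably, or more simply join the disk $B_R$ to the given ellipse through centered ellipses. The torsion functions $u_t$ are smooth up to the boundary and depend continuously in $C^2(\overline{E_t})$ after pulling back by linear maps; this is the continuous dependence invoked in Lemma~\ref{lem:stability}. By (a), $D^2u_0<0$ on the closed disk. Let $t^*$ be the first parameter at which the largest eigenvalue of $D^2u_t$ reaches $0$ somewhere on $\overline{E_{t^*}}$; compactness and continuity give attainment.

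If the touching point is interior, I would apply a maximum-principle argument. For a unit direction $e$, differentiating $-\Lou u=1$ twice gives
\[
\Lou(u_{ee})=2u_{ee}\le 0 \quad\text{near a maximum of } u_{ee}.
\]
The function $w(x)=\max_{|e|=1}u_{ee}$ is then a viscosity subsolution of $\Lou w- 2w\ge0$. Since $w\le 0$ everywhere with $w=0$ at an interior point, the strong maximum principle forces $w\equiv0$. This contradicts the boundary behaviour, namely $u_\nu<0$ by Hopf and the tangential second derivative $u_{\tau\tau}=-\kappa|\nabla u|<0$. So the touching must occur on $\partial E_{t^*}$.

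On the boundary, $u=0$ gives $u_{\tau\tau}=\kappa u_\nu<0$, with $\kappa>0$ the curvature, and the equation gives $u_{\nu\nu}=-1+\nu\cdot x\,u_\nu - u_{\tau\tau}+\kappa u_\nu$ in the appropriate form. The Hessian is therefore determined by $u_\nu$, its tangential derivative, and the curvature. A zero eigenvalue at a boundary point means
\[
u_{\tau\tau}u_{\nu\nu}=u_{\tau\nu}^2,\qquad u_{\tau\nu}=\partial_\tau u_\nu+\kappa\,(\text{tangential term}),
\]
and the resulting condition involves the flux $|u_\nu|$ and its derivative along the boundary. The main obstacle is excluding this boundary degeneracy. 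I would first prove a flux estimate $|u_\nu|$ bounded in terms of the support function $h=x\cdot\nu$ and $\kappa$ on ellipses; a barrier comparison with a quadratic $c(1-x^2/a^2-y^2/b^2)$ gives two-sided bounds. I would then use third-order boundary identities, obtained by differentiating the equation and the boundary condition tangentially, to show that at a first boundary zero the normal derivative of the degenerate eigenvalue has the wrong sign. This would contradict $w\le0$ inside. Whether these estimates close for all eccentricities is the step I am least sure of.
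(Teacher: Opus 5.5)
Your part (a) is the paper's proof: the same formula $-\phi'/r=\int_0^1t^{n-1}e^{r^2(1-t^2)/2}\,dt$, its monotonicity, and $\phi''=-g-rg'$. (Your intermediate expression should read $\phi''=-1+(n-1)g-r^2g$, but you never need it.)

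Part (b), however, has a genuine gap. You have the right architecture, which is the paper's: deformation from the disk, interior exclusion via $(\Lou-2)u_{ee}=0$, a quadratic flux barrier, third-order boundary identities, and a sign contradiction for a normal derivative at a first boundary contact. But the step you flag as uncertain is the whole substance of the result, and nothing in the proposal shows it closes. The sign cannot come from convexity alone, since otherwise the same scheme would also cover the smooth uniformly convex counterexamples of Theorem~\ref{thm:main-nonconcave}. It comes from an exact computation that uses the geometry of the ellipse. Three things are missing.
\begin{enumerate}
\item[(i)] Express the contact through $F=\det D^2u=\kappa qE-q_s^2$, where $q=-u_\nu$, $u_{\nu\nu}=-E$ and $E=1+(h-\kappa)q$ (your displayed $u_{\nu\nu}$ formula has a spurious $\kappa u_\nu$). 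Then use both contact conditions $F=0$ and $F_s=0$; the second is what determines $q_{ss}$.
\item[(ii)] Use the identities $u_{TT\nu}=-q_{ss}+\kappa(\kappa q-E)$, $u_{T\nu\nu}=(\kappa_s-\kappa h')q-(h-3\kappa)q_s$ and $u_{\nu\nu\nu}=-q-h'q_s-hE-u_{TT\nu}$. The last comes from $\Delta u_\nu-x\cdot\nabla u_\nu-u_\nu=0$ on the boundary. Eliminating $q_{ss}$ via $F_s=0$ gives the closed form $F_\nu=\frac\kappa2[(1+hq)(3+2hq)(1+\beta)+2q^2]$ with $\beta=h'q_s/(hE)$.
\item[(iii)] Positivity reduces to $\beta>-1$. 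Since $q_s^2=\kappa qE$ at contact, $\beta^2=\kappa qh'^2/(h^2E)$, so one needs exactly $E>\kappa qh'^2/h^2$.
\end{enumerate}
A two-sided bound on $q$ does not give (iii) by itself. It follows from the one-sided barrier $(S-1)hq\le1$, with $S=a^{-2}+b^{-2}$, obtained from the maximum principle for $(S-1)u-\frac12(1-x^2/a^2-y^2/b^2)$. It does so only because of the ellipse identity $\kappa/h+\kappa h'^2/h^3=S-1/h^2$, which cancels the curvature dependence and yields $E-\kappa qh'^2/h^2=1-(S-1)hq+q/h\ge q/h$.

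Two smaller points. First, in the interior case, $\max_{|e|=1}u_{ee}\equiv0$ is not contradicted by $u_{\tau\tau}<0$, since one negative diagonal entry does not rule out a zero eigenvalue. Instead, apply the strong maximum principle to the classical solution $u_{e_0e_0}$ for the fixed direction $e_0$ realizing the contact, and conclude $u_{e_0e_0}\equiv0$. This is contradicted at the boundary point with tangent $e_0$, where $u_{e_0e_0}=-\kappa q<0$; alternatively, $u$ would be affine on chords parallel to $e_0$. Second, Lemma~\ref{lem:stability} gives only $C^2_{\mathrm{loc}}$ convergence. Defining the first contact time requires continuity of $D^2u_t$ up to the boundary, which needs global Schauder estimates for the problem pulled back to the unit disk.
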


Theorem~\ref{thm:main-concave}(b) is proved by deforming the disk
into the ellipse.  A zero eigenvalue of $D^2u$ cannot appear first at an
interior point, by the maximum principle for the second derivatives of
$u$ (Lemma~\ref{lem:interior}).  At a first zero of $\det D^2u$ on the
boundary, the outward normal derivative of $\det D^2u$ is nonpositive,
while an upper barrier for the boundary flux $-u_\nu$ and the exact
third-order boundary identities of the ellipse show that it is
positive.  The scheme is the equality-case analysis of
the continuity method for convexity properties of elliptic problems
\cite{CF85,KL87}, carried out on the boundary rather than in the
interior.

Section~\ref{sec:prelim} collects notation and standard facts.
Section~\ref{sec:nonconcave} proves Theorem~\ref{thm:main-nonconcave}.
Section~\ref{sec:ballsec} proves part (a) of
Theorem~\ref{thm:main-concave}.
Sections~\ref{sec:criterionsec} and~\ref{sec:touching} prove part (b):
Section~\ref{sec:criterionsec} reduces concavity to a boundary
criterion and proves the flux estimate, and Section~\ref{sec:touching}
carries out the deformation and the equality-case analysis.

\section{Preliminaries}\label{sec:prelim}

\subsection{Regularity and comparison}\label{sec:regularity}

Let $\Omega\subset\R^n$ be a bounded convex domain. By \cite[Thm.~6.13 and Thm.~6.17]{GT}, the problem
\eqref{eq:torsion} has a unique solution
$u\in C^{\infty}(\Omega)\cap C(\overline\Omega)$. Since the coefficients are
real-analytic, $u$ is real-analytic in $\Omega$ \cite{Hopf32,MN57}. If
$\partial\Omega$ is $C^\infty$, then $u\in C^\infty(\overline\Omega)$
\cite[Thm.~6.19]{GT}. If $\partial\Omega$ contains a relatively open flat
piece $F$, then $u\in C^\infty(\Omega\cup F)$ \cite[Lemma~6.18]{GT}. The strong maximum principle gives $u>0$ in
$\Omega$ \cite[Thm.~3.5]{GT}.

The operator is symmetric with respect to $\gamma$:
$e^{-|x|^2/2}\Lou u=\operatorname{div}\bigl(e^{-|x|^2/2}\nabla u\bigr)$.
Hence a function $\varphi\in W^{1,2}_{\mathrm{loc}}$ solves $\Lou\varphi=0$
weakly if and only if it solves the divergence-form equation
$\operatorname{div}(e^{-|x|^2/2}\nabla\varphi)=0$ weakly, and such
$\varphi$ is $C^\infty$ \cite[Cor.~8.11]{GT} and then real-analytic
\cite{Hopf32,MN57}.

\subsection{Notation} For symmetric matrices $A$ and $B$, we write
$A\leq B$ if $B-A$ is positive semidefinite and $A<B$ if $B-A$ is
positive definite.  For $R>0$, $B_R=B_R(0)$ is the open ball of radius
$R$ centered at the origin.

For $n\ge1$, let $u_n$ be the radial solution of
\begin{equation}\label{eq:radialODE}
u_n''+\frac{n-1}{r}u_n'-ru_n'=-1,\qquad u_n(0)=0,\quad u_n'(0)=0 .
\end{equation}
The torsion function of $B_R\subset\R^n$ is $u_n(|x|)-u_n(R)$.
\subsection{Support functions and Minkowski addition}\label{sec:convex}

For a convex body $K\subset\R^n$, let $h_K(\xi)=\max_{z\in K}z\cdot\xi$ be
its support function as a function on the unit sphere $\mathbb{S}^{n-1}$.  Minkowski addition of convex bodies corresponds to addition of their support functions,
$h_{K+ L}=h_K+h_L$. The
Hausdorff distance satisfies
$d_H(K,L)=\|h_K-h_L\|_{L^\infty({\mathbb{S}}^{n-1})}$, so
$d_H(K+ M,L + M)=d_H(K,L)$. A convex body $K$ is of class $C^\infty_+$ if $\partial K$ is a
$C^\infty$ hypersurface with positive Gauss curvature. Then $h_K$, extended by homogeneity to the complement of the origin, $h_K(y)=|y|h_K(\frac{y}{|y|})$, is
$C^\infty$ on $\R^n\setminus\{0\}$ and the principal radii of curvature of
$\partial K$ at the point with outer normal $\xi\in {\mathbb{S}}^{n-1}$ are the
eigenvalues of the Hessian of $h_K$ restricted to $\xi^\perp$
\cite[\S2.5]{Sch14}. Every convex body is a Hausdorff limit of bodies of
class $C^\infty_+$, and if $K$ is centrally symmetric the approximants can
be chosen centrally symmetric \cite[\S3.4]{Sch14}, \cite[p.~438]{Sch84}.

\section{Non-concavity on uniformly convex domains}\label{sec:nonconcave}

The proof of Theorem~\ref{thm:main-nonconcave} has three ingredients.
A domain with a flat boundary face has a non-concave torsion function
(Lemma~\ref{lem:face}), such a domain has uniformly convex smooth
approximants (Lemma~\ref{lem:approx}), and the torsion function depends
continuously on the domain in $C^2_{\mathrm{loc}}$
(Lemma~\ref{lem:stability}).

\begin{lemma}\label{lem:face}
Let $\Omega\subset\R^n$, $n\ge2$, be a bounded convex domain whose boundary
contains a nonempty relatively open subset $F$ of a hyperplane, and let $u$
be its torsion function. Then there exists $q\in\Omega$ with
$\lambda_{\max}\bigl(D^2u(q)\bigr)>0$.
\end{lemma}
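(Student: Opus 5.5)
The plan is to look at the Hessian of $u$ directly on the flat face $F$, where $u\in C^\infty(\Omega\cup F)$ by the regularity facts of Section~\ref{sec:regularity}. After a rotation (the operator $\Lou$ is rotation invariant), assume $F\subset\{x_n=c\}$ and $\Omega\subset\{x_n<c\}$. Since $u=0$ on $F$, all tangential derivatives vanish there: $\partial_i u=0$ and $\partial_{ij}u=0$ on $F$ for $i,j<n$. Hence $\nabla u=u_n e_n$ on $F$, and the equation restricted to $F$ reads
\[
u_{nn}-c\,u_n=-1 \quad\text{on }F .
\]

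The key claim is that $u_n$ is not constant on $F$. Suppose $u_n\equiv a$ on $F$, and let $g$ solve the ODE
\[
g''-tg'=-1,\qquad g(c)=0,\quad g'(c)=a .
\]
Then $v(x)=u(x)-g(x_n)$ satisfies $\Lou v=0$ in $\Omega$, with $v=0$ and $\partial_n v=0$ on $F$. The operator has analytic coefficients and the hyperplane is noncharacteristic, so Holmgren's uniqueness theorem gives $v\equiv0$ near $F$. Since $v$ is real-analytic in $\Omega$ (Section~\ref{sec:regularity}) and $\Omega$ is connected, $v\equiv0$ in $\Omega$, that is, $u(x)=g(x_n)$.

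This contradicts the boundary condition. Let $m=\min_{\overline\Omega}x_n<c$. Because $\Omega$ is bounded, convex and $n\ge2$, and $F$ is only part of the face, there is a boundary point $y\notin\overline F$ with $m<y_n<c$. For instance, take a point of $\partial\Omega$ on a line $x_n=t$ near $c$ through an interior point, lying outside the cylinder over $F$. Then $g(y_n)=u(y)=0$ by continuity of $u$ on $\overline\Omega$. But interior points $x$ with $x_n=y_n$ exist, and for them $g(y_n)=u(x)>0$ by the strong maximum principle. This contradiction proves the claim.

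I expect this step to be the main obstacle, in two respects. First, one must justify Holmgren (or Cauchy--Kovalevskaya uniqueness) for a solution that is only $C^\infty$ up to $F$. Holmgren's theorem needs only $C^2$ regularity up to the surface, which we have. Second, one must choose the boundary point $y$ carefully using convexity.

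With the claim in hand, pick $p\in F$ and a tangential direction, say $e_1$, with $b:=\partial_1u_n(p)\neq0$. The principal $2\times2$ submatrix of $D^2u(p)$ in the directions $e_1,e_n$ is
\[
\begin{pmatrix}0&b\\ b&u_{nn}(p)\end{pmatrix},
\]
whose determinant is $-b^2<0$. This submatrix is therefore indefinite, so $\lambda_{\max}(D^2u(p))>0$. Since $D^2u$ is continuous on $\Omega\cup F$, the strict inequality $\lambda_{\max}(D^2u)>0$ persists at interior points $q\in\Omega$ close to $p$, which proves the lemma.
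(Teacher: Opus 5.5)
Your proof is correct and is essentially the paper's: the same dichotomy on the flat piece (an indefinite $2\times2$ block in the directions $e_i,e_n$ versus $\partial_nu$ constant), the same comparison with the one-dimensional solution $g(x_n)$, unique continuation, and the same slice contradiction. The only real difference is that where you invoke one-sided Holmgren, the paper extends $u-g(x_n)$ by zero across $F$, checks that this gives a $C^1$ weak solution of $\Lou\varphi=0$, and concludes by elliptic regularity and real-analyticity.

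One small fix is needed. First replace $F$ by a connected piece, for example a small disc, as the paper does. The statement ``$u_n$ is not constant on $F$'' only yields a point with $\partial_iu_n\neq0$ for some $i<n$ when $F$ is connected, and your contradiction argument applies verbatim to any such piece.
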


The idea is due to Keady and McNabb: for the Laplacian in the plane,
$\det D^2u$ equals minus the square of the mixed normal--tangential
derivative on a straight boundary segment, so it is negative wherever the
normal derivative of $u$ varies along the segment \cite[Thm.~9.6 and
\S11]{KM93}.

\begin{proof}
After a rotation about the origin, which leaves $\Lou$ invariant, we may
choose coordinates $x=(x',x_n)$ with $F\subset\{x_n=y_0\}$ and
$\Omega\subset\{x_n<y_0\}$, so that the hyperplane containing $F$ supports
the convex set $\Omega$. Shrinking $F$, we may assume that $F$ is connected. By
Section~\ref{sec:regularity}, $u\in C^\infty(\Omega\cup F)$ and $u$ is
real-analytic in $\Omega$.

On $F$, $u\equiv0$, so $\partial_iu=\partial_i\partial_ju=0$ on $F$ for
$i,j<n$.  For $p\in F$ and $i<n$, the restriction of $D^2u(p)$ to
$\operatorname{span}\{e_i,e_n\}$ has matrix
\[
\begin{pmatrix}0&u_{in}(p)\\ u_{in}(p)&u_{nn}(p)\end{pmatrix},
\]
whose determinant is $-u_{in}(p)^2$.

Suppose first that $u_{in}(p)\neq0$ for some $p\in F$ and some $i<n$. Then
the quadratic form of $D^2u(p)$ is indefinite on
$\operatorname{span}\{e_i,e_n\}$, so $\lambda_{\max}(D^2u)(p)>0$, and by
continuity of $D^2u$ on $\Omega\cup F$ the same holds at points
$q\in\Omega$ near $p$.

Suppose next that $u_{in}\equiv0$ on $F$ for all $i<n$. Then all
tangential derivatives of $\partial_nu$ vanish on the connected set $F$, so
$\partial_nu\equiv c_0$ on $F$ for a constant $c_0$. Let $\widetilde w$ solve
\[
\widetilde w''(s)-s\,\widetilde w'(s)=-1,\qquad
\widetilde w(y_0)=0,\quad\widetilde w'(y_0)=c_0,
\]
and set $v(x)=\widetilde w(x_n)$. Then $\Lou v=-1$. The function
$\varphi=u-v$ satisfies $\Lou\varphi=0$ in $\Omega$ and
$\varphi=0$, $\nabla\varphi=0$ on $F$. Fix $p\in F$ and a ball
$D$ centered at $p$ with $D\cap\partial\Omega\subset F$, so that
$D^-=D\cap\{x_n<y_0\}\subset\Omega$. Let $\widetilde\varphi=\varphi$ on
$D^-$ and $\widetilde\varphi=0$ on $D\cap\{x_n\ge y_0\}$. Since $\varphi$
and $\nabla\varphi$ vanish on $F$, $\widetilde\varphi\in C^1(D)$, and for
$\psi\in C_c^\infty(D)$, integration by parts on $D^-$ gives
\[
\int_D e^{-|x|^2/2}\nabla\widetilde\varphi\cdot\nabla\psi\,dx
=-\int_{D^-}e^{-|x|^2/2}(\Lou\varphi)\psi\,dx
+\int_{D\cap F}e^{-|x|^2/2}\,\partial_n\varphi\,\psi\,dx'=0 .
\]
Thus $\widetilde\varphi$ is a weak solution of $\Lou\widetilde\varphi=0$
in $D$, hence real-analytic in $D$ by Section~\ref{sec:regularity}. It
vanishes on the open set $D\cap\{x_n>y_0\}$, so it vanishes on $D$. Hence
$u=\widetilde w(x_n)$ on $D^-$, and by real-analyticity on the connected
set $\Omega$, $u=\widetilde w(x_n)$ on $\Omega$.

Choose $y_1<y_0$ such that the slice $S=\Omega\cap\{x_n=y_1\}$ is
nonempty, thus it is a bounded open subset of the hyperplane $\{x_n=y_1\}$, and
$u\equiv\widetilde w(y_1)$ is constant on $S$. Since $u>0$ in $\Omega$,
$\widetilde w(y_1)>0$. Let $z^*$ be a boundary point of $S$ relative to the
hyperplane. Then $z^*\in\partial\Omega$, so $u(z^*)=0$ by continuity of $u$
on $\overline\Omega$, while $u(z)=\widetilde w(y_1)>0$ for $z\in S$
arbitrarily close to $z^*$. This contradiction shows that the second case
does not occur.
\end{proof}

\begin{lemma}\label{lem:approx}
Let $n\ge2$, $L,R_c>0$, $Q=[-L,L]^{n-1}\times\{0\}\subset\R^n$, and
$C=Q+\overline{B_{R_c}}$.  Then $C$ is a centrally symmetric convex body
whose boundary contains the relatively open flat subsets
$(-L,L)^{n-1}\times\{\pm R_c\}$, and for every $\delta>0$ there is a
centrally symmetric convex body $C_\delta$ of class $C^\infty_+$ with
$d_H(C_\delta,C)\le\delta$ whose principal curvatures lie in
$[c_\delta,R_c^{-1}]$ for some $c_\delta>0$.
\end{lemma}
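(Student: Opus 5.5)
The plan is to work entirely with support functions (Section~\ref{sec:convex}) and to build $C_\delta$ as a Minkowski sum with a ball. Central symmetry and convexity of $C$ are immediate: $Q$ and $\overline{B_{R_c}}$ are centrally symmetric convex bodies, and both properties pass to Minkowski sums.

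\textbf{The flat faces.} The support function of $C$ is $h_C(\xi)=h_Q(\xi)+R_c=L\sum_{i<n}|\xi_i|+R_c$. For $\xi=\pm e_n$ this gives $h_C=R_c$. The face of $C$ in direction $\pm e_n$ is $F(C,\pm e_n)=F(Q,\pm e_n)+F(B_{R_c},\pm e_n)=Q\pm R_ce_n$, which is $[-L,L]^{n-1}\times\{\pm R_c\}$. Its relative interior $(-L,L)^{n-1}\times\{\pm R_c\}$ is relatively open in the hyperplane $\{x_n=\pm R_c\}$. It is contained in $\partial C$, being a subset of a supporting hyperplane intersected with $C$.

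\textbf{The approximants.} By the approximation result cited in Section~\ref{sec:convex}, for $\delta>0$ there is a centrally symmetric convex body $Q_\delta$ of class $C^\infty_+$ with $d_H(Q_\delta,Q)\le\delta$. Here $Q$ is lower dimensional, but the cited approximation applies to all convex bodies; one can also first replace $Q$ by $Q+\overline{B_{\delta/2}}$. Set
\[
C_\delta=Q_\delta+\overline{B_{R_c}} .
\]
It is centrally symmetric, and $d_H(C_\delta,C)=d_H(Q_\delta,Q)\le\delta$ by translation invariance of $d_H$ under Minkowski addition.

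\textbf{Curvature bounds.} Extended homogeneously, $h_{C_\delta}=h_{Q_\delta}+R_c|y|$ is $C^\infty$ on $\R^n\setminus\{0\}$. On $\xi^\perp$, the Hessian of $|y|$ at $\xi$ is the identity. Hence the restricted Hessian of $h_{C_\delta}$ equals that of $h_{Q_\delta}$ plus $R_cI$. Since $Q_\delta\in C^\infty_+$, the restricted Hessian of $h_{Q_\delta}$ has eigenvalues (the principal radii of $Q_\delta$) that are positive. These eigenvalues are continuous on the compact sphere, so they are bounded above by some $M_\delta<\infty$. The principal radii of $C_\delta$ therefore lie in $[R_c,R_c+M_\delta]$.

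It remains to see that $C_\delta$ is of class $C^\infty_+$. The boundary map $\xi\mapsto\nabla h_{C_\delta}(\xi)$ is then a $C^\infty$ immersion of $\mathbb{S}^{n-1}$ with positive definite differential, and it is the inverse of the Gauss map. So $\partial C_\delta$ is a $C^\infty$ hypersurface with principal curvatures equal to the reciprocals of the radii. These curvatures lie in $[c_\delta,R_c^{-1}]$ with $c_\delta=(R_c+M_\delta)^{-1}>0$.

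The only delicate step is this regularity converse: a support function that is smooth with positive definite restricted Hessian determines a $C^\infty_+$ body. I would quote it from \cite[\S2.5]{Sch14}. Alternatively, one can note directly that $C_\delta$ is the parallel body at distance $R_c$ of the $C^\infty_+$ body $Q_\delta$. Its boundary is $\{x+R_c\nu(x): x\in\partial Q_\delta\}$, a smooth normal graph with radii increased by $R_c$.
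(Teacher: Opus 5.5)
Your proof is correct and is essentially the paper's argument: approximate $Q$ by a centrally symmetric $C^\infty_+$ body, add $\overline{B_{R_c}}$, and read off the radii bounds from $h_{C_\delta}=h_{Q_\delta}+R_c|\cdot|$. The paper uses your fallback of first passing to $Q+\frac\delta2\overline{B_1}$, which avoids the question of whether the cited approximation applies to the lower-dimensional $Q$, and your face computation $F(C,\pm e_n)=Q\pm R_ce_n$ together with your remark on the regularity converse only make explicit what the paper leaves implicit.
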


\begin{proof}
Convexity, central symmetry and the description of the faces follow from
the definition of $C$. The set $Q_\delta=Q+\frac\delta2\overline{B_1}$ is a
centrally symmetric convex body in $\R^n$ with nonempty interior and
$d_H(Q_\delta,Q)=\delta/2$. By Section~\ref{sec:convex}, there exists a
centrally symmetric convex body $K_\delta\subset\R^n$ of class
$C^\infty_+$ with $d_H(K_\delta,Q_\delta)\le\delta/2$, hence
$d_H(K_\delta,Q)\le\delta$. Set $C_\delta=K_\delta+\overline{B_{R_c}}$.
Then $C_\delta$ is convex and centrally symmetric, and
$d_H(C_\delta,C)=d_H(K_\delta,Q)\le\delta$.

Since $K_\delta$ is of class $C^\infty_+$, its support function is
$C^\infty$ on $\R^n\setminus\{0\}$, and its principal radii of curvature
are positive and bounded above by some $r_\delta<\infty$. Now
$h_{C_\delta}=h_{K_\delta}+R_c|\cdot|$, and the Hessian of $R_c|\xi|$
restricted to $\xi^\perp$ is $R_c\,\mathrm{Id}$ for $|\xi|=1$. Hence
$h_{C_\delta}$ is $C^\infty$ with tangential Hessian between
$R_c\,\mathrm{Id}$ and $(R_c+r_\delta)\,\mathrm{Id}$, so $C_\delta$ is
of class $C^\infty_+$ with principal radii of curvature in
$[R_c,R_c+r_\delta]$, that is, principal curvatures in
$[(R_c+r_\delta)^{-1},R_c^{-1}]$.
\end{proof}

\begin{lemma}\label{lem:stability}
Let $\Omega_j$, $\Omega\subset\R^n$ be bounded convex domains with
$d_H(\Omega_j,\Omega)\to0$, and let $u_j$, $u$ be their torsion functions.
Then $u_j\to u$ in $C^2_{\mathrm{loc}}(\Omega)$.
\end{lemma}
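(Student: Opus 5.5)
The plan is a compactness argument: uniform bounds, interior estimates, and uniqueness of the limit. After that, the only real work is identifying the limit's boundary values.

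\emph{Step 1 (uniform bounds).} Since $d_H(\Omega_j,\Omega)\to0$, all the $\Omega_j$ lie in a fixed ball $B_{R}$. I will compare with the radial torsion function of a larger ball. Set $W(x)=u_n(|x|)-u_n(R)$ on $B_R$. It satisfies $-\Lou W=1$ and $W\ge0$ on $\partial\Omega_j$, so the comparison principle gives $0\le u_j\le W\le M$ for a constant $M$ independent of $j$.

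\emph{Step 2 (interior convergence).} Fix a compact set $K\subset\Omega$ and an open set $K\Subset U\Subset\Omega$. Hausdorff convergence of convex domains implies $U\subset\Omega_j$ for all large $j$; one can see this by taking a small ball around each point of $\overline U$, or via the inradius and support functions. Interior Schauder estimates \cite[Thm.~6.2]{GT} for $\Lou$, which has smooth bounded coefficients on $B_R$, bound $\|u_j\|_{C^{2,\alpha}(K)}$ in terms of $M$. By Arzel\`a--Ascoli and a diagonal argument, every subsequence has a further subsequence converging in $C^2_{\mathrm{loc}}(\Omega)$ to some $v$ with $-\Lou v=1$ in $\Omega$. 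It then suffices to show $v=u$, because uniqueness of the limit upgrades subsequential convergence to convergence of the full sequence.

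\emph{Step 3 (boundary values; the main obstacle).} I must show $v$ extends continuously by $0$ to $\partial\Omega$. For this I want a barrier that is uniform in $j$, and convexity supplies it. Take $z\in\partial\Omega$ and a supporting half-space $\{x\cdot\xi<h_\Omega(\xi)\}$ of $\Omega$ at $z$. Then $\Omega_j\subset\{x\cdot\xi<h_\Omega(\xi)+\epsilon_j\}$ with $\epsilon_j=d_H(\Omega_j,\Omega)$. Consider the one-dimensional function $\beta_j(x)=\widetilde w(h_\Omega(\xi)+\epsilon_j-x\cdot\xi)$, where $\widetilde w$ is chosen with $\widetilde w(0)=0$ and $\widetilde w''-s\widetilde w'\le-1$ in the relevant range. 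I expect to arrange this as in the proof of Lemma~\ref{lem:face} via an ODE with a large initial slope, using that everything stays in $B_R$. Then $\beta_j\ge0$ on $\partial\Omega_j$ and $-\Lou\beta_j\ge1$, so comparison gives $u_j\le\beta_j$ on $\Omega_j$. Passing to the limit yields $0\le v(x)\le C\,(h_\Omega(\xi)-x\cdot\xi)$ for $x\in\Omega$, and the right side tends to $0$ as $x\to z$. I expect the care here to go into checking that $\widetilde w$ can be chosen with $\widetilde w\ge0$ and with uniform constants on $[0,2R]$.

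\emph{Step 4 (conclusion).} By Step 3, $v\in C(\overline\Omega)$ with $v=0$ on $\partial\Omega$ and $-\Lou v=1$, so $v=u$ by the uniqueness in Section~\ref{sec:regularity}. Hence $u_j\to u$ in $C^2_{\mathrm{loc}}(\Omega)$.
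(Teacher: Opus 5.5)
Your proposal is correct and is essentially the paper's proof: comparison with the ball's torsion function, interior Schauder compactness, a one-dimensional barrier on a supporting half-space to give the limit zero boundary values, and uniqueness; the only variation is that you use supporting half-spaces of the limit $\Omega$ shifted outward by $\epsilon_j$, whereas the paper uses supporting half-spaces of each $\Omega_j$ at nearest boundary points to get $u_j\le C_\rho\dist(\cdot,\partial\Omega_j)$ uniformly in $j$ and then uses $\dist(z,\partial\Omega_j)\to\dist(z,\partial\Omega)$. One small correction in Step 3: with $t=h_\Omega(\xi)+\epsilon_j-x\cdot\xi$ one has $\Lou\beta_j=\widetilde w''(t)+(x\cdot\xi)\,\widetilde w'(t)$, so the condition to impose is not $\widetilde w''-t\,\widetilde w'\le-1$ but, for instance, $\widetilde w'>0$ and $\widetilde w''+R\,\widetilde w'\le-1$ on $[0,2R+1]$, which an exponential $A(1-e^{-kt})$ as in the paper satisfies with $k=R+1$ and $A\ge e^{k(2R+1)}/k$ (alternatively, solve the ODE of Lemma~\ref{lem:face} in the variable $x\cdot\xi$ itself, with zero value at $h_\Omega(\xi)+\epsilon_j$ and a large negative slope there).
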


\begin{proof}
All domains lie in a fixed ball $B_\rho$. Let $U$ be the torsion function of
$B_\rho$. Domain monotonicity gives $0\le u_j\le\max U$ for all $j$.

Fix $j$ and $\zeta\in\partial\Omega_j$, and let $\nu$ be a unit vector with
$\Omega_j\subset\{z:(z-\zeta)\cdot\nu\le0\}$. Set $s(z)=-(z-\zeta)\cdot\nu$,
$k=\rho+1$, $A=e^{2k\rho}/k$ and $b=A(1-e^{-ks})$. On $\Omega_j$,
\[
-\Lou b=Ak^2e^{-ks}-Ake^{-ks}(x\cdot\nu)\ge Ake^{-ks}(k-\rho)
\ge Ake^{-2k\rho}=1,
\]
and $b\ge0$ on $\partial\Omega_j$, so $u_j\le b\le Aks$ by the maximum
principle. Given $z\in\Omega_j$, choose $\zeta\in\partial\Omega_j$ with
$|z-\zeta|=\dist(z,\partial\Omega_j)$ and $\nu=(\zeta-z)/|\zeta-z|$.
Since $B_{|z-\zeta|}(z)\subset\Omega_j$, every supporting hyperplane of
$\Omega_j$ at $\zeta$ is tangent to this ball, so $\nu$ is admissible
and $s(z)=|z-\zeta|$. Hence
\begin{equation}\label{eq:barrier}
u_j(z)\le C_\rho\dist(z,\partial\Omega_j),\qquad C_\rho=Ak,
\end{equation}
uniformly in $j$.

Let $K\Subset\Omega$. Since $d_H(\Omega_j,\Omega)\to0$, $K\subset\Omega_j$
for large $j$. Interior Schauder estimates \cite[Thm.~6.2]{GT} give
$\|u_j\|_{C^{2,\alpha}(K)}\le C(K)$, so a subsequence converges in
$C^2_{\mathrm{loc}}(\Omega)$ to some $v$ with $-\Lou v=1$ in $\Omega$. For
$z\in\Omega$, the inclusions $\overline\Omega\subset\overline{\Omega_j}+
\overline{B_\varepsilon}$ and $\overline{\Omega_j}\subset\overline\Omega
+\overline{B_\varepsilon}$ for $\varepsilon>d_H(\Omega_j,\Omega)$ give
$|\dist(z,\partial\Omega_j)-\dist(z,\partial\Omega)|\le\varepsilon$ for
large $j$, so $\dist(z,\partial\Omega_j)\to\dist(z,\partial\Omega)$, and
\eqref{eq:barrier} gives
$0\le v(z)\le C_\rho\dist(z,\partial\Omega)$, so $v$ extends continuously
by $0$ to $\partial\Omega$. By uniqueness of the solution of
\eqref{eq:torsion}, $v=u$. Since every subsequence has a further
subsequence converging to $u$, the whole sequence converges.
\end{proof}

\begin{proof}[Proof of Theorem \ref{thm:main-nonconcave}]
Let $C=Q+\overline{B_{R_c}}$ be as in Lemma~\ref{lem:approx}, with any
$L,R_c>0$, let $\Omega=\operatorname{int}C$, and let $u$ be its torsion
function. By Lemma~\ref{lem:face} applied to the face
$(-L,L)^{n-1}\times\{R_c\}$, there are $q\in\Omega$ and $\varepsilon>0$
with $\lambda_{\max}(D^2u)(q)\ge2\varepsilon$. Let $C_\delta$ be the
bodies of Lemma~\ref{lem:approx}, $\Omega_\delta=\operatorname{int}C_\delta$,
and $u_\delta$ the torsion functions of $\Omega_\delta$.
Since $q$ is an interior point of $C$ and
$d_H(C_\delta,C)\to0$, $q\in\Omega_\delta$ for small $\delta$,
and by Lemma~\ref{lem:stability}, $D^2u_\delta(q)\to D^2u(q)$. Hence
$\lambda_{\max}(D^2u_\delta)(q)\ge\varepsilon$ for all small $\delta$, and
$u_\delta$ is not concave. The domains $\Omega_\delta$ are the
domains claimed.
\end{proof}

For $n=2$, the body $C$ is the stadium $[-L,L]\times\{0\}+\overline{B_{R_c}}$
and the face is a flat side, on which $\det D^2u=-u_{12}^2$. For $n\ge3$,
the full determinant of $D^2u$ vanishes on the face, because the
tangential $(n-1)\times(n-1)$ block is zero, and the obstruction is the
indefiniteness of a $2\times2$ principal submatrix mixing one tangential
direction with the normal.
The dichotomy in the proof of Lemma~\ref{lem:face}, that on a flat boundary
piece the normal derivative either varies along the piece or is constant
and then forces $u$ to be one-dimensional, is the analytic continuation
argument of Henrot and Oudet for the second Dirichlet eigenfunction
\cite{HO03}, \cite[\S4.2]{Hen06}: if a minimizer of $\lambda_2$ among
convex plane domains of given area is $C^{1,1}$, then its boundary
contains segments, and on such a segment the normal derivative of the
eigenfunction cannot be constant. In both problems, the flat piece is a
place where the boundary flux is forced to vary, and here it is this
variation, $u_{t\nu}\neq0$, that produces the indefinite Hessian.

The curvature lower bound in Lemma~\ref{lem:approx} degenerates as
$\delta\to0$. However, curvature size alone does not explain the distinction between these examples and ellipses centered at the origin, whose Ornstein--Uhlenbeck torsion functions are concave by Theorem~\ref{thm:main-concave}.

\section{Balls}
\label{sec:ballsec}

\begin{theorem}\label{thm:ball}
Let $n\ge1$ and let $u=u_n$ solve \eqref{eq:radialODE}. Then for all
$r>0$,
\[
u'(r)<-\frac rn\qquad\text{and}\qquad u''(r)<-\frac1n .
\]
Consequently the torsion function of the ball $B_R\subset\R^n$ centered
at the origin satisfies $D^2u\leq-\frac1nI$ on $\overline{B_R}$ for
every $R>0$.
\end{theorem}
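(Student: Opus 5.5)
We work directly with the ODE \eqref{eq:radialODE} in the form $(r^{n-1}e^{-r^2/2}u')'=-r^{n-1}e^{-r^2/2}$. Integrating from $0$ gives the explicit flux formula
\[
u'(r)=-\frac{e^{r^2/2}}{r^{n-1}}\int_0^r s^{n-1}e^{-s^2/2}\,ds .
\]
For the first inequality, set $g(r)=r^{n-1}e^{-r^2/2}u'(r)+\frac rn r^{n-1}e^{-r^2/2}$. Then $g(0)=0$ and
\[
g'(r)=-r^{n-1}e^{-r^2/2}+\tfrac1n\bigl(n r^{n-1}-r^{n+1}\bigr)e^{-r^2/2}=-\tfrac1n r^{n+1}e^{-r^2/2}<0 .
\]
Hence $g(r)<0$ for $r>0$, which is $u'(r)<-r/n$. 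Equivalently, $\int_0^r s^{n-1}e^{-s^2/2}ds>\frac{r^n}{n}e^{-r^2/2}$, because the integrand is decreasing after we factor out $e^{-r^2/2}$.

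For the second inequality we substitute into the ODE: $u''=-1+ru'-\frac{n-1}{r}u'$. The target $u''<-1/n$ becomes
\[
\Bigl(r-\frac{n-1}{r}\Bigr)u'<\frac{n-1}{n}.
\]
Where $r^2\ge n-1$ this is immediate, since the left side is $\le0$ because $u'<0$. Where $r^2<n-1$ the coefficient is negative, so we need the lower bound $-u'(r)<\frac{(n-1)r}{n(n-1-r^2)}$. We will prove it by the same kind of monotone-comparison trick. Write $w=-u'$, which satisfies the first-order linear equation $w'=1-\frac{n-1}{r}w+rw$ with $w(0)=0$. The claim is that $w$ stays below the supersolution $\psi(r)=\frac{(n-1)r}{n(n-1-r^2)}$ on $(0,\sqrt{n-1})$. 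Near $0$ we have $w\sim r/n$ and $\psi\sim r/n$, so the comparison is delicate at the origin, and this is the step we expect to be the main obstacle.

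We plan to handle it by a direct Taylor/comparison argument, or more cleanly by a second auxiliary function. Let $h=u''+1/n$ and differentiate the ODE. We get
\[
u'''+\frac{n-1}{r}u''-\frac{n-1}{r^2}u'-u'-ru''=0 .
\]
Rewriting in terms of $h$ and $q=u''-u'/r$ gives the system
\[
h'=-\frac{n-1}{r}q+u'+r\bigl(h-\tfrac1n\bigr).
\]
Here $q=r(u'/r)'$, and the first inequality gives $u'+r(h-\tfrac1n)<-\frac rn+rh-\frac rn$. We would then argue by a first crossing. If $r_0$ is the first point where $h(r_0)=0$, then $h'(r_0)\ge0$. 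Using the ODE, $u''(r_0)=-1/n$ forces $q(r_0)=-\frac1n-\frac{u'(r_0)}{r_0}>0$ by the first inequality, and also $u'(r_0)<-r_0/n$. Combining these, $h'(r_0)=-\frac{n-1}{r_0}q(r_0)+u'(r_0)-\frac{r_0}{n}<0$, which is a contradiction. The start of the argument is covered because $u''(0)=-1/n$ and $u'''$ expansion gives $h<0$ for small $r>0$: from the series $u'=-\frac rn-\frac{r^3}{n(n+2)}+\dots$ we get $u''=-\frac1n-\frac{3r^2}{n(n+2)}+\dots$. So no crossing occurs, and $u''<-1/n$ for all $r>0$.

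Finally, for the torsion function $U(x)=u(|x|)-u(R)$ we have
\[
D^2U=u''\,\hat x\otimes\hat x+\frac{u'}{r}\bigl(I-\hat x\otimes\hat x\bigr),
\]
so both eigenvalues are $<-1/n$ for $x\neq0$. At the origin $D^2U(0)=u''(0)I=-\frac1nI$. Hence $D^2U\le-\frac1nI$ on $\overline{B_R}$.
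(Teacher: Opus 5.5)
Your proof is correct, but for the second inequality it takes a different route from the paper.

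\textbf{The paper's route.} The paper substitutes $t=rs$ in the flux integral to write $u'(r)=-rA(r)$ with $A(r)=\int_0^1 s^{n-1}\exp\bigl(\frac{r^2}{2}(1-s^2)\bigr)\,ds$. Both $A>\frac1n$ and $A'>0$ can be read off the integrand, so both inequalities follow at once from $u'/r=-A$ and $u''=-A-rA'$.

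\textbf{Your route.} Your monotone quantity $g$ is equivalent to $A>\frac1n$, so the first inequality is essentially the paper's argument. For the second, you instead differentiate the ODE and run a first-crossing argument for $u''+\frac1n$. At the crossing you use the first inequality to get $q(r_0)>0$ and hence $h'(r_0)<-2r_0/n$. You start the argument with the expansion $u''=-\frac1n-\frac{3r^2}{n(n+2)}+\cdots$. This is sound. The expansion is legitimate because your explicit formula gives $u'(r)=-r\Phi(r^2)$ with $\Phi$ entire, and the same fact justifies $D^2U(0)=u''(0)I$.

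\textbf{Comparison.} Your route uses only the ODE and a maximum-principle mechanism, in the spirit of the Hessian maximum principle the paper uses later, rather than the specific rescaled integral. The paper's route is shorter. It also yields the stronger ordering $u''=u'/r-rA'<u'/r$, so the radial eigenvalue lies strictly below the tangential one.

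\textbf{Presentation.} The paragraph with the supersolution $\psi$ is a detour you never complete. Its claim $-u'<\psi$ on $r^2<n-1$ is equivalent to $u''<-\frac1n$ there, so it follows from your crossing argument. You should simply delete that paragraph.
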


\begin{proof}
Multiplying \eqref{eq:radialODE} by the integrating factor
$r^{n-1}e^{-r^2/2}$ gives
$\bigl(r^{n-1}e^{-r^2/2}u'\bigr)'=-r^{n-1}e^{-r^2/2}$, and integrating
from $0$ to $r$, with $r^{n-1}u'(r)\to0$ at the origin,
\[
 u'(r)=-e^{r^2/2}\,r^{1-n}\int_0^r t^{n-1}e^{-t^2/2}\,dt .
\]
The substitution $t=rs$ gives
\begin{equation}\label{eq:uprimeint}
 u'(r)=-r\,A(r),
 \qquad
 A(r):=\int_0^1 s^{n-1}\exp\Bigl(\frac{r^2}{2}(1-s^2)\Bigr)\,ds .
\end{equation}
For $r>0$ the exponent is positive on $(0,1)$, so
\[
 A(r)>\int_0^1 s^{n-1}\,ds=\frac1n,
 \qquad
 A'(r)=r\int_0^1 s^{n-1}(1-s^2)\exp\Bigl(\frac{r^2}{2}(1-s^2)\Bigr)\,ds>0 .
\]
Hence, for $r>0$,
\[
 \frac{u'(r)}{r}=-A(r)<-\frac1n,
 \qquad
 u''(r)=-A(r)-rA'(r)<-\frac1n .
\]

The Hessian of the radial function $u(|x|)$ has the radial eigenvalue
$u''(r)$ and the tangential eigenvalue $u'(r)/r$, the latter repeated
$n-1$ times, and both are less than $-\frac1n$ for $r>0$.  At
the origin, $A(0)=\frac1n$ and $A'(0)=0$, so $u''(0)=-\frac1n$ and
$D^2u(0)=-\frac1nI$.
\end{proof}

The constant $\frac1n$ cannot be improved, since $D^2u(0)=-\frac1nI$.
Part (a) of Theorem~\ref{thm:main-concave} is Theorem~\ref{thm:ball}.

\section{Ellipses: the boundary Hessian and the flux estimate}\label{sec:criterionsec}

In this section, we reduce the concavity of $u$ to an inequality for a
single function on $\partial\Om$.  Interior degeneracies of $D^2u$ are
excluded by the maximum principle, because each second derivative
$u_{\xi\xi}$ satisfies $(\Lou-2)u_{\xi\xi}=0$
(Lemma~\ref{lem:interior}).  On the boundary, parametrized
by the outward normal angle $\theta$, the entries of $D^2u$ are
expressed in terms of the boundary flux $q=-u_\nu$ and the geometry of
the ellipse (Proposition~\ref{prop:boundary_hessian}), and concavity
becomes the flux inequality \eqref{eq:criterion} of
Theorem~\ref{thm:criterion}.  Section~\ref{sec:barrierssec} proves an
upper barrier for $q$ and the flux estimate
(Lemma~\ref{lem:fluxest}) on which the deformation argument of
Section~\ref{sec:touching} rests.

\subsection{The torsion problem on an ellipse}\label{sec:setup}

Throughout Sections~\ref{sec:criterionsec} and~\ref{sec:touching},
\[
 \Om=\left\{(x,y)\in\R^2:
 \frac{x^2}{a^2}+\frac{y^2}{b^2}<1\right\},
 \qquad a\geq b>0,
\]
and $u$ is the torsion function of $\Om$, the solution of
\eqref{eq:torsion}.  Since $\Lou$ is invariant under rotations about the
origin, every ellipse centered at the origin reduces to this case.  The
boundary is analytic, so $u\in C^\infty(\overline\Om)$ by
Section~\ref{sec:regularity}, and the maximum principle and the Hopf
lemma give
\[
 u>0\quad\text{in }\Om,
 \qquad u_\nu<0\quad\text{on }\partial\Om,
\]
where $\nu$ is the outward unit normal.  By uniqueness and the symmetry
of $\Lou$, $u$ is even in each coordinate:
\[
 u(-x,y)=u(x,y),\qquad u(x,-y)=u(x,y).
\]
Since $\Lou$ is not invariant under dilations, the normalization $ab=1$
cannot be imposed.

\subsection{Interior maximum principle for the Hessian}

For a function $f$ and a constant unit vector $\xi\in\mathbb S^1$,
\begin{equation}\label{eq:commutator}
 \partial_\xi(\Lou f)=\Lou(f_\xi)-f_\xi .
\end{equation}
Fix $\xi$. Applying \eqref{eq:commutator} to $u$, with $\Lou u=-1$, gives
$\Lou (u_{\xi})=u_{\xi}$, and applying it again to $u_\xi$ gives
\begin{equation}\label{eq:hessian_equation}
 (\Lou-2)u_{\xi\xi}=0\quad\text{in }\Om
 \qquad\text{for every constant unit vector }\xi .
\end{equation}

At a point $x$, $D^2u(x)\leq0$ if and only if $u_{\xi\xi}(x)\leq0$ for
every unit vector $\xi$, and $D^2u\leq0$ on $\overline\Om$ is equivalent
to the concavity of $u$.

\begin{lemma}[Boundary reduction]\label{lem:interior}
If $D^2u\leq0$ on $\partial\Om$, then $D^2u\leq0$ on $\overline\Om$ and
$D^2u<0$ in $\Om$.  In particular, $u$ is concave.
\end{lemma}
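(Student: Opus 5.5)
Fix a constant unit vector $\xi\in\mathbb S^1$ and set $w=u_{\xi\xi}$. Since $u\in C^\infty(\overline\Om)$, $w$ is continuous on $\overline\Om$. By \eqref{eq:hessian_equation}, $w$ satisfies $(\Lou-2)w=0$ in $\Om$, i.e.\ $\Delta w - x\cdot\nabla w -2w=0$, an elliptic equation whose zeroth-order coefficient $-2$ is negative. The hypothesis $D^2u\le0$ on $\partial\Om$ gives $w\le0$ on $\partial\Om$. The weak maximum principle for operators $L$ with $c\le0$ \cite[Thm.~3.1/Cor.~3.2]{GT}, applied to $Lw=0$, gives $\sup_\Om w\le\sup_{\partial\Om}w^+=0$. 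Hence $u_{\xi\xi}\le0$ on $\overline\Om$ for every $\xi$, which is $D^2u\le0$ on $\overline\Om$.

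For strict negativity in the interior I will use the strong maximum principle \cite[Thm.~3.5]{GT}. The function $w\le0$ solves $Lw=0$ with $c=-2\le0$. If $w(x_0)=0$ at some $x_0\in\Om$, then $w$ attains a nonnegative maximum at an interior point, so $w\equiv0$ in the connected set $\Om$. This gives $u_{\xi\xi}\equiv0$ in $\Om$. But then $0=(\Lou-2)w$ is consistent, so I will derive the contradiction another way.

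Choose coordinates with $\xi=e_1$. Then $u_{11}\equiv0$, so $u$ is affine in $x_1$ along every segment of $\Om$ parallel to $e_1$. On such a chord, $u$ vanishes at both endpoints, which lie on $\partial\Om$, and $u$ is continuous on $\overline\Om$, so $u\equiv0$ on the chord. This contradicts $u>0$ in $\Om$.

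Therefore $u_{\xi\xi}<0$ in $\Om$ for every unit $\xi$. Since the unit circle is compact and $u_{\xi\xi}(x)$ is continuous in $\xi$, this is $D^2u(x)<0$ for every $x\in\Om$. Concavity then follows from $D^2u\le0$ on the convex set $\overline\Om$. I expect no real obstacle. The only care needed is in the sign conventions of the maximum principle: the zeroth-order term must have the right sign, and the constant $-2$ does.
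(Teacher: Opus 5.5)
Your proof is correct and follows the paper's argument essentially step for step. You apply the weak maximum principle to $w=u_{\xi\xi}$, which solves $(\Lou-2)w=0$, then the strong maximum principle, and rule out $w\equiv0$ because $u$ would be affine and vanish on every chord parallel to $\xi$, contradicting $u>0$; the paper does exactly this.
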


\begin{proof}
Fix a unit vector $\xi$.  By \eqref{eq:hessian_equation}, $w=u_{\xi\xi}$
satisfies $(\Lou-2)w=0$ in $\Om$, and $\Lou-2$ has zeroth-order coefficient
$-2<0$.  Since $w\leq0$ on $\partial\Om$, the weak maximum principle for
such operators gives $w\leq0$ on $\overline\Om$.  If $w=0$ at an
interior point, then $w$ attains the nonnegative maximum $0$ in $\Om$,
so $w\equiv0$ in $\Om$ by the strong maximum principle
\cite[Thm.~3.5]{GT}.  Then $u$ is affine on every chord of $\Om$ parallel
to $\xi$, and since $u$ vanishes at both endpoints of the chord, $u\equiv0$
on it, contradicting $u>0$ in $\Om$.  Hence $u_{\xi\xi}<0$ in $\Om$ for
every $\xi$.
\end{proof}

For the Laplacian in the plane, this propagation of concavity from the boundary goes back at least to Keady and McNabb, who use the harmonicity of $\log(\frac14-\det D^2u)$ \cite[Thm.~9.1 and the proof of Thm.~9.3]{KM93}.

\subsection{Geometry of the ellipse}

Parametrize $\partial\Om$ by the outward normal angle $\theta$.  Thus
\[
 \nu(\theta)=(\cos\theta,\sin\theta),
 \qquad T(\theta)=(-\sin\theta,\cos\theta),
 \qquad \nu'=T,\quad T'=-\nu .
\]
The support function of the ellipse is
\begin{equation}\label{eq:support}
 h(\theta)=\sqrt{a^2\cos^2\theta+b^2\sin^2\theta},
 \qquad
 h'(\theta)=\frac{(b^2-a^2)\sin\theta\cos\theta}{h(\theta)} .
\end{equation}
The boundary point with outward normal $\nu(\theta)$ is
\begin{equation}\label{eq:P}
 P(\theta)=h\,\nu+h'\,T
 =\left(\frac{a^2\cos\theta}{h(\theta)},\ \frac{b^2\sin\theta}{h(\theta)}\right).
\end{equation}
Differentiating \eqref{eq:P} and using $\nu'=T$, $T'=-\nu$,
\[
 P'(\theta)=(h+h'')\,T=\rho\,T ,
\]
so $\theta\mapsto P(\theta)$ traverses $\partial\Om$ counterclockwise with
speed $\rho$. Thus, the radius of curvature and the curvature at $P(\theta)$
are, respectively,
\begin{equation}\label{eq:curvature}
 \rho(\theta)=h+h''=\frac{a^2b^2}{h^3},
 \qquad
 \kappa(\theta)=\frac1\rho=\frac{h^3}{a^2b^2}.
\end{equation}

Let $s$ be positively oriented arc length on $\partial\Om$, so that
$ds=\rho\,d\theta$, and let $\theta(s)$ be the inverse of
$s(\theta)=\int_0^\theta\rho$.  We write $P(s)$, $\nu(s)$, $T(s)$, $h(s)$
and so on for $P(\theta(s))$, $\nu(\theta(s))$, etc.: the same boundary
quantities viewed as functions of arc length.  Then $\frac{dP}{ds}=T$ and
the Frenet formulas for the outward-normal convention read
\begin{equation}\label{eq:frenet}
 \frac{dT}{ds}=-\kappa\,\nu,
 \qquad
 \frac{d\nu}{ds}=\kappa\,T,
 \qquad
 \frac{d}{ds}=\kappa\,\frac{d}{d\theta}.
\end{equation}
Throughout, a prime on a function of $\theta$ means $d/d\theta$, and a
subscript $s$ means $d/ds$, so that $f_s=\kappa f'$.

\subsection{The boundary Hessian and the concavity criterion}

\begin{definition}  The boundary flux is given by 
\[
 q(\theta):=-u_\nu\bigl(P(\theta)\bigr)>0 .
\]
\end{definition}
Since $u=0$ on $\partial\Om$, differentiating $u(P(s))=0$ along arc length
and using $\frac{dP}{ds}=T$ gives $u_T=\nabla u\cdot T=0$, so
\begin{equation}\label{eq:gradu}
 \nabla u=-q\,\nu\quad\text{on }\partial\Om .
\end{equation}

Write $u_{TT}=D^2u(T,T)$, $u_{T\nu}=D^2u(T,\nu)$, $u_{\nu\nu}=D^2u(\nu,\nu)$
at $P(s)$.

\emph{Tangential entry.}  Differentiating $u(P(s))=0$ twice along arc
length, with $\frac{dP}{ds}=T$, \eqref{eq:frenet} and \eqref{eq:gradu},
\[
 0=\frac{d^2}{ds^2}u(P(s))
 =D^2u(T,T)+\nabla u\cdot\frac{dT}{ds}
 =u_{TT}+\kappa q ,
\]
hence
\begin{equation}\label{eq:utt}
 u_{TT}=-\kappa q<0 .
\end{equation}

\emph{Mixed entry.}  Differentiating $q(s)=-\nabla u(P(s))\cdot\nu(s)$
along arc length, with \eqref{eq:frenet} and $\nabla u\cdot T=0$,
\[
 q_s=-D^2u(T,\nu)-\kappa\,\nabla u\cdot T=-u_{T\nu},
\]
hence
\begin{equation}\label{eq:utn}
 u_{T\nu}=-q_s .
\end{equation}

\emph{Normal entry.}  Evaluate $\Lou u=-1$ at $P(\theta)$: $\Delta u=u_{TT}+u_{\nu\nu}$
and, by \eqref{eq:gradu} and $P\cdot\nu=h$, $x\cdot\nabla u=-qh$.  So
$u_{TT}+u_{\nu\nu}+hq=-1$, and by \eqref{eq:utt}
\begin{equation}\label{eq:unn}
 u_{\nu\nu}=-E,
 \qquad E:=1+(h-\kappa)\,q .
\end{equation}

\begin{proposition}[Boundary Hessian]\label{prop:boundary_hessian}
In the orthonormal frame $\{T,\nu\}$,
\begin{equation}\label{eq:matrix}
 D^2u\big|_{P(\theta)}=
 \begin{pmatrix}
  -\kappa q&-q_s\\[3pt]
  -q_s&-E
 \end{pmatrix},
\end{equation}
and
\begin{equation}\label{eq:determinant}
 F:=\det D^2u\big|_{P(\theta)}
 =\kappa qE-q_s^2 .
\end{equation}
\end{proposition}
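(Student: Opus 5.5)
The proposition collects the three boundary identities \eqref{eq:utt}, \eqref{eq:utn} and \eqref{eq:unn} just derived, so the proof is mostly a matter of assembling them. The one point to settle is that they compute the entries of $D^2u$ in the frame $\{T,\nu\}$, for the second fundamental form convention of \eqref{eq:frenet}. I would fix $\theta$ and write $D^2u$ at $P(\theta)$ as the bilinear form $(X,Y)\mapsto D^2u(X,Y)$. Since $\{T(\theta),\nu(\theta)\}$ is an orthonormal basis of $\R^2$, the matrix of this form in that basis has entries $D^2u(T,T)$, $D^2u(T,\nu)=D^2u(\nu,T)$ and $D^2u(\nu,\nu)$; symmetry uses $u\in C^\infty(\overline\Om)$.

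Next I would recheck each entry.
\begin{enumerate}
\item[(i)] For the tangential entry, differentiate $u(P(s))=0$ twice. The chain rule gives $\frac{d}{ds}u(P)=\nabla u\cdot T$ and
\[
\frac{d^2}{ds^2}u(P)=D^2u(T,T)+\nabla u\cdot T_s=u_{TT}-\kappa\,\nabla u\cdot\nu .
\]
Inserting $\nabla u=-q\nu$ from \eqref{eq:gradu} yields $u_{TT}=-\kappa q$.
\item[(ii)] For the mixed entry, differentiate $q=-\nabla u(P)\cdot\nu$. This gives
\[
q_s=-D^2u(T,\nu)-\nabla u\cdot\nu_s=-u_{T\nu}-\kappa\,\nabla u\cdot T=-u_{T\nu},
\]
since $u_T=0$ on $\partial\Om$.
\item[(iii)] For the normal entry, the trace $\Delta u=u_{TT}+u_{\nu\nu}$ is frame-independent. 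Also $x\cdot\nabla u=-q\,P\cdot\nu=-qh$, because $P=h\nu+h'T$ by \eqref{eq:P}. Thus the equation $\Delta u-x\cdot\nabla u=-1$ reads $-\kappa q+u_{\nu\nu}+qh=-1$, that is, $u_{\nu\nu}=-1-(h-\kappa)q=-E$.
\end{enumerate}
Assembling (i)--(iii) gives \eqref{eq:matrix}.

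Finally, the determinant is basis-independent under orthonormal changes of frame. Hence
\[
F=(-\kappa q)(-E)-(-q_s)^2=\kappa qE-q_s^2,
\]
which is \eqref{eq:determinant}.

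There is no real obstacle. The only care needed is with signs: $dT/ds=-\kappa\nu$ for the outward normal, and $P\cdot\nu=h$, which makes the drift term contribute $+hq$ rather than $-hq$. I would also remark that the $\theta$-parametrization and arc length agree up to the factor $\kappa$ in $q_s=\kappa q'$. The entries are therefore well defined whichever variable is used, as long as $q_s$ denotes the arc-length derivative.
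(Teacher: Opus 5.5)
Your proof is correct and takes essentially the same approach as the paper, which also obtains the three entries by differentiating $u(P(s))=0$ twice, differentiating $q=-\nabla u(P)\cdot\nu$ along arc length, and evaluating $\Delta u-x\cdot\nabla u=-1$ at the boundary using $\nabla u=-q\nu$ and $P\cdot\nu=h$, as in \eqref{eq:utt}--\eqref{eq:unn}. The determinant formula then follows directly from the matrix.
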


\begin{theorem}[Boundary criterion]\label{thm:criterion}
The following are equivalent:
\begin{enumerate}
 \item[(i)] $D^2u\leq0$ at every point of $\partial\Om$.
 \item[(ii)] $F\geq0$ on $\partial\Om$, that is,
 \begin{equation}\label{eq:criterion}
  q_s^2\leq\kappa\,qE\qquad\text{on }\partial\Om .
 \end{equation}
\end{enumerate}
If these hold, then $u$ is concave on $\overline\Om$.
\end{theorem}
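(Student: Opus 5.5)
The plan is to read the equivalence off the explicit boundary Hessian of Proposition~\ref{prop:boundary_hessian} and then invoke Lemma~\ref{lem:interior} for the final assertion. The key fact is elementary: a real symmetric $2\times2$ matrix is negative semidefinite if and only if its diagonal entries are $\le0$ and its determinant is $\ge0$. Moreover, if one diagonal entry is strictly negative, then nonnegativity of the determinant alone suffices, since the trace is then automatically negative.

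For (i)$\Rightarrow$(ii), fix $P(\theta)\in\partial\Om$ and use the frame $\{T,\nu\}$, where $D^2u$ has the form \eqref{eq:matrix}. If $D^2u\le0$ there, both eigenvalues are $\le0$, so their product $F=\det D^2u\ge0$. By \eqref{eq:determinant}, this is exactly $q_s^2\le\kappa qE$, which is \eqref{eq:criterion}.

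For (ii)$\Rightarrow$(i), the tangential entry $u_{TT}=-\kappa q$ is strictly negative by \eqref{eq:utt}, since $\kappa>0$ and $q>0$ by the Hopf lemma. Assume $F\ge0$. If $F>0$, the two eigenvalues have the same sign. If $F=0$, one eigenvalue vanishes. In either case, it remains only to rule out a positive eigenvalue. The quadratic form of $D^2u$ takes the negative value $-\kappa q$ at $T$, so at least one eigenvalue is negative. When $F>0$, both eigenvalues must therefore be negative. When $F=0$, the eigenvalues are a negative one and zero. Either way $D^2u\le0$ at $P(\theta)$. Note that no sign information on $E$ is needed: $F\ge0$ together with $-\kappa q<0$ already forces $-E\le0$, since $\kappa qE\ge q_s^2\ge0$. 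This gives (i) at every boundary point.

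Finally, if (i) holds, Lemma~\ref{lem:interior} yields $D^2u\le0$ on $\overline\Om$ and $D^2u<0$ in $\Om$. In particular $u$ is concave on $\overline\Om$, being $C^2$ up to the boundary with negative semidefinite Hessian on the convex set. There is no real obstacle. The only point requiring care is the strict negativity of $u_{TT}$, which comes from $q>0$ (Hopf) and the strict convexity $\kappa>0$ of the ellipse; this is what lets the single scalar condition $F\ge0$ control the full matrix.
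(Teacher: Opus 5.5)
Your proposal is correct and is essentially the paper's proof. The paper likewise uses that a symmetric $2\times2$ matrix with the negative diagonal entry $u_{TT}=-\kappa q<0$ is negative semidefinite if and only if its determinant is nonnegative, and then invokes Lemma~\ref{lem:interior} for global concavity.
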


\begin{proof}
By \eqref{eq:utt} the first diagonal entry of \eqref{eq:matrix} is negative,
and a symmetric $2\times2$ matrix with a negative diagonal entry is negative
semidefinite if and only if its determinant is nonnegative.  Global
concavity follows from Lemma~\ref{lem:interior}.
\end{proof}

\subsection{The upper flux barrier and the flux estimate}\label{sec:barrierssec}

This subsection proves an upper barrier for the flux $q$ by comparison
with an explicit quadratic (Proposition~\ref{prop:barriers}) and the
flux estimate of Lemma~\ref{lem:fluxest}, which is used in the
deformation argument of Section~\ref{sec:touching}.

The vertices show why such a bound is needed.  Since $u$ is even in
each coordinate,
\begin{equation}\label{eq:qsym}
 q(-\theta)=q(\theta),
 \qquad q(\pi-\theta)=q(\theta),
 \qquad\text{hence}\qquad
 q_s=0\ \text{ at }\theta=0,\tfrac\pi2 ,
\end{equation}
so at the four vertices the boundary Hessian \eqref{eq:matrix} is
diagonal and the criterion \eqref{eq:criterion} reduces to $E\geq0$.
At the major-axis vertex, where $h=a$ and $\kappa=a/b^2$, this reads
$(a/b^2-a)\,q(0)\leq1$, an upper bound on the flux when $b<1$.

\begin{proposition}[Upper flux barrier]\label{prop:barriers}
Let $S:=\frac1{a^2}+\frac1{b^2}$.  On $\partial\Om$,
\begin{equation}\label{eq:upperbarrier}
 (S-1)\,h\,q\leq1 .
\end{equation}
\end{proposition}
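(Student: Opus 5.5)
The plan is to compare $u$ with an explicit quadratic that vanishes on $\partial\Om$ and is a supersolution. If $S\le1$ there is nothing to prove: the left side of \eqref{eq:upperbarrier} is $\le0$ because $h>0$ and $q>0$. So assume $S>1$. Set
\[
 Q(x,y)=c\Bigl(1-\frac{x^2}{a^2}-\frac{y^2}{b^2}\Bigr),\qquad c=\frac{1}{2(S-1)}>0 .
\]

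\emph{Step 1: $Q$ is a supersolution.} We have $\nabla Q=-2c\,(x/a^2,\,y/b^2)$ and $\Delta Q=-2cS$. Hence
\[
 x\cdot\nabla Q=-2c\Bigl(\frac{x^2}{a^2}+\frac{y^2}{b^2}\Bigr),
 \qquad
 -\Lou Q=2cS-2c\Bigl(\frac{x^2}{a^2}+\frac{y^2}{b^2}\Bigr).
\]
In $\Om$ the bracket is $<1$, so $-\Lou Q> 2c(S-1)=1=-\Lou u$.

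\emph{Step 2: comparison.} Since $Q=u=0$ on $\partial\Om$, the weak maximum principle \cite[Thm.~3.1]{GT} applied to $Q-u$, which satisfies $-\Lou(Q-u)\ge0$, gives $u\le Q$ in $\Om$. Both functions vanish on $\partial\Om$, so comparing difference quotients along the inward normal gives $-u_\nu\le -Q_\nu$ on $\partial\Om$. That is, $q\le -Q_\nu$.

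\emph{Step 3: compute the normal derivative of $Q$.} At $P(\theta)$, formula \eqref{eq:P} gives
\[
 \Bigl(\frac{x}{a^2},\frac{y}{b^2}\Bigr)=\Bigl(\frac{\cos\theta}{h},\frac{\sin\theta}{h}\Bigr)=\frac{\nu}{h}.
\]
Therefore $\nabla Q=-(2c/h)\,\nu$, and so $-Q_\nu=2c/h=\dfrac{1}{(S-1)h}$. Combining with Step 2 yields $(S-1)\,h\,q\le1$, which is \eqref{eq:upperbarrier}.

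There is no real obstacle here. The only point needing care is the sign case $S\le1$, where the constant $c$ would not be positive; that case is handled separately by the trivial observation made at the start.
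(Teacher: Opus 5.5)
Your proof is correct and is essentially the paper's argument: the paper compares $u$ with the same quadratic through $v=(S-1)u-\tfrac12(1-\mathcal Q)$, which is $(S-1)(u-Q)$ in your notation. It then applies the maximum principle and reads off the normal derivative at the boundary exactly as you do. Putting the factor $(S-1)$ on $u$ instead of dividing by it gives $\Lou v=1-\mathcal Q\ge0$ whatever the sign of $S-1$, so the paper does not need your separate (correct, trivial) case $S\le1$.
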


\begin{proof}
Let $\mathcal Q(x,y)=x^2/a^2+y^2/b^2$, so that
$\Lou\mathcal Q=2S-2\mathcal Q$.  On $\partial\Om$, the outward
normal is parallel to $\nabla\mathcal Q$, so by \eqref{eq:P}
\begin{equation}\label{eq:gradQ}
 \partial_\nu\mathcal Q=|\nabla\mathcal Q(P(\theta))|
 =2\sqrt{\frac{x^2}{a^4}+\frac{y^2}{b^4}}=\frac{2}{h(\theta)} .
\end{equation}
Let $v=(S-1)u-\tfrac12(1-\mathcal Q)$.  Then
\[
 \Lou v=-(S-1)+S-\mathcal Q=1-\mathcal Q\geq0\ \text{ in }\Om,
 \qquad v=0\ \text{ on }\partial\Om,
\]
so $v\leq0$ in $\Om$ by the maximum principle, and hence $v_\nu\geq0$
on $\partial\Om$.  By \eqref{eq:gradQ}, $v_\nu=-(S-1)q+1/h$, which
gives \eqref{eq:upperbarrier}.
\end{proof}

When $S\leq1$, inequality \eqref{eq:upperbarrier} holds trivially.  When
$S>1$, it is the upper bound $q\leq1/((S-1)h)$.  The flux estimate
below combines it with two identities for the ellipse.

\begin{lemma}\label{lem:Sident}
On $\partial\Om$,
\begin{equation}\label{eq:Sident}
 h'^2=\frac{(a^2-h^2)(h^2-b^2)}{h^2},
 \qquad
 \frac{\kappa}{h}+\frac{\kappa\,h'^2}{h^3}=S-\frac1{h^2}.
\end{equation}
\end{lemma}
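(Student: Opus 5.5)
Both identities in \eqref{eq:Sident} are algebraic consequences of the explicit formulas \eqref{eq:support} and \eqref{eq:curvature}. The plan is to express everything through $c=\cos^2\theta$ and $\sin^2\theta=1-c$, using $h^2=a^2c+b^2(1-c)$, and then verify two polynomial identities.

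For the first identity, we solve $h^2=b^2+(a^2-b^2)c$ for the trigonometric factors:
\[
\cos^2\theta=\frac{h^2-b^2}{a^2-b^2},\qquad \sin^2\theta=\frac{a^2-h^2}{a^2-b^2}.
\]
Substituting these into $h'^2=(a^2-b^2)^2\sin^2\theta\cos^2\theta/h^2$ from \eqref{eq:support} gives
\[
h'^2=\frac{(a^2-h^2)(h^2-b^2)}{h^2}
\]
immediately. In the disk case $a=b$, both sides vanish trivially, since then $h\equiv a$ and $h'=0$.

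For the second identity, we use $\kappa=h^3/(a^2b^2)$ from \eqref{eq:curvature}. The left side becomes
\[
\frac{\kappa}{h}+\frac{\kappa h'^2}{h^3}=\frac{h^2+h'^2}{a^2b^2}.
\]
By the first identity,
\[
h^2+h'^2=\frac{h^4+(a^2-h^2)(h^2-b^2)}{h^2}=\frac{(a^2+b^2)h^2-a^2b^2}{h^2}.
\]
Dividing by $a^2b^2$ yields
\[
\frac{a^2+b^2}{a^2b^2}-\frac1{h^2}=S-\frac1{h^2},
\]
as claimed. A geometric cross-check is available: $h^2+h'^2=|P|^2$ by \eqref{eq:P}. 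One could instead verify the identity directly from the coordinates of $P$ and $x^2/a^2+y^2/b^2=1$, but the algebraic route above is shorter.

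There is no real obstacle here. The only point needing care is the expansion $h^4+(a^2-h^2)(h^2-b^2)=(a^2+b^2)h^2-a^2b^2$, in which the $h^4$ terms cancel.
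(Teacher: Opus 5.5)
Your proof is correct and follows essentially the same route as the paper: you rewrite $a^2-h^2$ and $h^2-b^2$ through $\sin^2\theta$ and $\cos^2\theta$ to get the first identity, then use $\kappa=h^3/(a^2b^2)$ and the cancellation of the $h^4$ terms to get the second. The only cosmetic difference is that you divide by $a^2-b^2$ and treat $a=b$ separately, whereas the paper multiplies $a^2-h^2=(a^2-b^2)\sin^2\theta$ by $h^2-b^2=(a^2-b^2)\cos^2\theta$, which covers the disk case without a separate argument.
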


\begin{proof}
From \eqref{eq:support}, $a^2-h^2=(a^2-b^2)\sin^2\theta$ and
$h^2-b^2=(a^2-b^2)\cos^2\theta$, so
\[
 h'^2=\frac{(a^2-b^2)^2\sin^2\theta\cos^2\theta}{h^2}
 =\frac{(a^2-h^2)(h^2-b^2)}{h^2},
\]
which also covers $a=b$.
Then, with $\kappa=h^3/(a^2b^2)$ from \eqref{eq:curvature},
\[
 \frac{\kappa}{h}\Bigl(1+\frac{h'^2}{h^2}\Bigr)
 =\frac{h^4+(a^2-h^2)(h^2-b^2)}{a^2b^2\,h^2}
 =\frac{(a^2+b^2)h^2-a^2b^2}{a^2b^2\,h^2}
 =S-\frac1{h^2}.
 \qedhere
\]
\end{proof}

\begin{lemma}[Flux estimate]\label{lem:fluxest}
On $\partial\Om$,
\begin{equation}\label{eq:fluxest}
 E-\kappa q\,\frac{h'^2}{h^2}\geq\frac qh>0 .
\end{equation}
In particular, $E>0$ on $\partial\Om$.
\end{lemma}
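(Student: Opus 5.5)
The estimate is algebraic once Proposition~\ref{prop:barriers} and Lemma~\ref{lem:Sident} are combined. Write $E=1+(h-\kappa)q$. The claim \eqref{eq:fluxest} is then equivalent to
\[
 1+hq-\kappa q\Bigl(1+\frac{h'^2}{h^2}\Bigr)-\frac qh\geq 0 .
\]
By the second identity in \eqref{eq:Sident}, multiplied by $h$,
\[
 \kappa\Bigl(1+\frac{h'^2}{h^2}\Bigr)=hS-\frac1h .
\]
Substituting, the two $q/h$ terms cancel, and the inequality becomes
\[
 1+hq-hSq\geq0,\qquad\text{that is,}\qquad (S-1)\,h\,q\leq1 .
\]
This is exactly the upper flux barrier \eqref{eq:upperbarrier}. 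So the plan is:
\begin{enumerate}
 \item expand $E$;
 \item collect the terms carrying $\kappa q$;
 \item apply Lemma~\ref{lem:Sident} to trade curvature for $S$ and $h$;
 \item invoke Proposition~\ref{prop:barriers}.
\end{enumerate}

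The strict positivity $q/h>0$ is immediate, since $q>0$ by the Hopf lemma and $h>0$. For the final assertion $E>0$, note that
\[
 E\geq \frac qh+\kappa q\,\frac{h'^2}{h^2},
\]
and the second term on the right is nonnegative because $\kappa,q>0$. Hence $E\ge q/h>0$.

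There is no real obstacle. The only point needing care is the bookkeeping in step (3): one must check that multiplying \eqref{eq:Sident} by $h$ yields $hS-1/h$, so that the $-q/h$ from the right-hand side of \eqref{eq:fluxest} cancels exactly. One should also note that the case $S\le1$ requires no separate treatment, since \eqref{eq:upperbarrier} then holds trivially.
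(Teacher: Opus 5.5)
Your proposal is correct and matches the paper's proof: expand $E=1+(h-\kappa)q$, use the second identity of Lemma~\ref{lem:Sident} to eliminate $\kappa$ so that $E-\kappa q\,h'^2/h^2=1-(S-1)hq+q/h$, apply the upper flux barrier \eqref{eq:upperbarrier}, and get $E\ge q/h>0$ from $\kappa q\,h'^2/h^2\ge0$.
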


\begin{proof}
By \eqref{eq:unn} and \eqref{eq:Sident},
\[
 E-\kappa q\,\frac{h'^2}{h^2}
 =1+\Bigl(1-\frac{\kappa}{h}-\frac{\kappa\,h'^2}{h^3}\Bigr)hq
 =1+\Bigl(1+\frac1{h^2}-S\Bigr)hq
 =1-(S-1)hq+\frac qh,
\]
so the dependence on $\kappa$ cancels.  By \eqref{eq:upperbarrier},
$1-(S-1)hq\geq0$, which gives \eqref{eq:fluxest}.  Since
$\kappa q\,h'^2/h^2\geq0$, also $E\geq q/h>0$.
\end{proof}

At this stage, both diagonal entries of the boundary Hessian
\eqref{eq:matrix} are negative at every boundary point:
$u_{TT}=-\kappa q<0$, and $u_{\nu\nu}=-E<0$ by
Lemma~\ref{lem:fluxest}.  In particular, $D^2u<0$ at the four
vertices, where $q_s=0$ by \eqref{eq:qsym}.  What is missing is control of
the off-diagonal entry $-q_s$.  Section~\ref{sec:touching} obtains the missing inequality by
deformation, trading a pointwise bound on $q_s$ for the information
available at a first failure of \eqref{eq:criterion}.

\section{Deformation and the equality-case analysis}\label{sec:touching}

The idea is to deform the disk into $\Om$ through the family of ellipses $\Om_\tau$
with semi-axes $(\tau,b)$, $b\leq\tau\leq a$.  On the disk $\Om_b$,
$D^2u<0$ by Theorem~\ref{thm:ball}.  Suppose there is a first $\tau$ so that $D^2u_\tau\leq0$ on
$\partial\Om_\tau$ and that $F=\det D^2u_\tau$ vanishes at a boundary
point $p$.  By Lemma~\ref{lem:interior}, $F\geq0$ on
$\overline{\Om_\tau}$, so the outward normal derivative $F_\nu(p)$ is
nonpositive.  On the other hand, the derivative of $F$ along the
boundary vanishes at $p$, which determines $q_{ss}$, and the
third-order boundary identities of Lemma~\ref{lem:third} then give a
closed form for $F_\nu(p)$ (Lemma~\ref{lem:contact}), which the flux
estimate of Lemma~\ref{lem:fluxest} shows to be positive
(Proposition~\ref{prop:touching}).  This is a contradiction and shows that there is no possible first contact $F=0$ on $\partial \Omega_{\tau}$, see Proposition~\ref{prop:touching}. Finally, Theorem~\ref{thm:ellipse} gives the full deformation argument.

For a fixed ellipse, we suppress $\tau$.  The boundary identities of
Section~\ref{sec:criterionsec}, and those proved below, hold on every
ellipse.

\subsection{Third derivatives on the boundary}

At a boundary point write $u_{TT\nu}=D^3u(T,T,\nu)$,
$u_{T\nu\nu}=D^3u(T,\nu,\nu)$, $u_{\nu\nu\nu}=D^3u(\nu,\nu,\nu)$ for the
components of the third differential of $u$ in the frame $\{T,\nu\}$ at
that point.

\begin{lemma}[Third-order boundary identities]\label{lem:third}
On $\partial\Om$,
\begin{align}
 u_{TT\nu}&=-q_{ss}+\kappa\,(\kappa q-E),\label{eq:uTTn}\\
 u_{T\nu\nu}&=(\kappa_s-\kappa h')\,q-(h-3\kappa)\,q_s,\label{eq:uTnn3}\\
 u_{\nu\nu\nu}&=-q-h'q_s-hE-u_{TT\nu}.\label{eq:unnn3}
\end{align}
\end{lemma}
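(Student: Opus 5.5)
The plan is to obtain all three identities by differentiating the second-order boundary relations of Proposition~\ref{prop:boundary_hessian} along arc length, and by differentiating the equation $\Lou u=-1$ in a fixed direction. Throughout, $u\in C^\infty(\overline\Om)$, so $D^2u$ and $D^3u$ are smooth up to the boundary. For a moving frame we have, by the chain rule and \eqref{eq:frenet},
\[
\frac{d}{ds}D^2u(X,Y)=D^3u(T,X,Y)+D^2u\Bigl(\frac{dX}{ds},Y\Bigr)+D^2u\Bigl(X,\frac{dY}{ds}\Bigr),
\]
with $\frac{dT}{ds}=-\kappa\nu$ and $\frac{d\nu}{ds}=\kappa T$. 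This is the only tool needed for the first two identities.

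For \eqref{eq:uTTn}, I apply this with $X=T$, $Y=\nu$ to the mixed entry $u_{T\nu}=-q_s$ from \eqref{eq:utn}. This gives
\[
-q_{ss}=u_{TT\nu}-\kappa u_{\nu\nu}+\kappa u_{TT}.
\]
Substituting $u_{\nu\nu}=-E$ from \eqref{eq:unn} and $u_{TT}=-\kappa q$ from \eqref{eq:utt} yields $u_{TT\nu}=-q_{ss}-\kappa E+\kappa^2q$, which is \eqref{eq:uTTn}.

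For \eqref{eq:uTnn3}, I apply it with $X=Y=\nu$ to $u_{\nu\nu}=-E$. This gives $-E_s=u_{T\nu\nu}+2\kappa u_{T\nu}$, so $u_{T\nu\nu}=-E_s+2\kappa q_s$. It remains to compute $E_s$ from $E=1+(h-\kappa)q$. Using $h_s=\kappa h'$ from \eqref{eq:frenet},
\[
E_s=(\kappa h'-\kappa_s)q+(h-\kappa)q_s,
\]
and collecting terms gives $(\kappa_s-\kappa h')q-(h-3\kappa)q_s$.

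For \eqref{eq:unnn3}, I fix a boundary point and let $\xi=\nu(\theta_0)$ be the constant vector. By \eqref{eq:commutator}, $\Lou(u_\xi)=u_\xi$ holds in $\Om$, and by smoothness up to the boundary it also holds at $P(\theta_0)$. There $\Delta u_\xi=u_{TT\nu}+u_{\nu\nu\nu}$, since the Laplacian is the trace in the orthonormal frame $\{T,\nu\}$. For the drift term, $x\cdot\nabla u_\xi=(P\cdot T)u_{T\nu}+(P\cdot\nu)u_{\nu\nu}=-h'q_s-hE$, because $P\cdot T=h'$ and $P\cdot\nu=h$ by \eqref{eq:P}. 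Finally $u_\xi=-q$. Hence
\[
u_{TT\nu}+u_{\nu\nu\nu}+h'q_s+hE=-q,
\]
which is \eqref{eq:unnn3}.

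No step is a real obstacle; the computations are all short. The only points needing care are sign conventions. One is the outward-normal Frenet formulas, which produce the $-\kappa u_{\nu\nu}+\kappa u_{TT}$ terms. The other is that in the third identity $\nu$ must be frozen as a constant vector before applying \eqref{eq:commutator}, so that no curvature terms enter.
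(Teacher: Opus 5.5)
Your proposal is correct and follows the paper's proof essentially step for step. The first two identities come from differentiating $u_{T\nu}=-q_s$ and $u_{\nu\nu}=-E$ along arc length with the outward-normal Frenet formulas. The third comes from evaluating $\Lou(u_\xi)=u_\xi$ at the boundary point with $\xi$ frozen as the normal there and $P=h\nu+h'T$; citing \eqref{eq:commutator} instead of writing out the differentiated equation, and noting that $u\in C^\infty(\overline\Om)$ justifies evaluation on $\partial\Om$, are only cosmetic differences.
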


\begin{proof}
For vector fields $V,W$ along $\partial\Om$,
\[
 \frac{d}{ds}\,D^2u(V,W)=D^3u(T,V,W)+D^2u(V_s,W)+D^2u(V,W_s),
\]
and $T_s=-\kappa\nu$, $\nu_s=\kappa T$ by \eqref{eq:frenet}.

Differentiating \eqref{eq:utn}, with $V=T$, $W=\nu$, gives
\[
 u_{TT\nu}-\kappa u_{\nu\nu}+\kappa u_{TT}=-q_{ss},
\]
and inserting $u_{TT}=-\kappa q$ and $u_{\nu\nu}=-E$ from
\eqref{eq:utt} and \eqref{eq:unn} gives \eqref{eq:uTTn}.

Differentiating \eqref{eq:unn}, with $h_s=\kappa h'$ and $V=W=\nu$,
\[
 u_{T\nu\nu}+2\kappa u_{T\nu}=-E_s=-(\kappa h'-\kappa_s)q-(h-\kappa)q_s ,
\]
where inserting $u_{T\nu}=-q_s$ gives \eqref{eq:uTnn3}.

For \eqref{eq:unnn3}, fix a constant unit vector $e$ and differentiate
\eqref{eq:torsion} in the direction $e$:
\[
 \Delta u_e-x\cdot\nabla u_e-u_e=0\quad\text{in }\Om .
\]
At a boundary point, $\Delta u_e=D^3u(T,T,e)+D^3u(\nu,\nu,e)$ and
$x\cdot\nabla u_e=D^2u(h\nu+h'T,e)$ by \eqref{eq:P}.  Taking for $e$ the
value of $\nu$ at the point, with $u_\nu=-q$, $u_{T\nu}=-q_s$ and
$u_{\nu\nu}=-E$,
\[
 u_{TT\nu}+u_{\nu\nu\nu}=-h'q_s-hE-q ,
\]
which is \eqref{eq:unnn3}.
\end{proof}

\subsection{The contact identity and the no boundary contact proposition}\label{sec:touchconfig}

Extend $F$ to $\overline\Om$ by $F:=\det D^2u$. This
agrees with \eqref{eq:determinant} on $\partial\Om$.  At a boundary point, in the frame
$\{T,\nu\}$ held fixed, $F=u_{TT}u_{\nu\nu}-u_{T\nu}^2$, so by
\eqref{eq:matrix}
\begin{equation}\label{eq:Fnu}
 F_\nu=u_{TT\nu}\,u_{\nu\nu}+u_{TT}\,u_{\nu\nu\nu}-2u_{T\nu}\,u_{T\nu\nu}
 =-E\,u_{TT\nu}-\kappa q\,u_{\nu\nu\nu}+2q_s\,u_{T\nu\nu} .
\end{equation}

\begin{lemma}[Contact identity]\label{lem:contact}
Suppose that at the boundary point $p=P(\theta^*)$
\begin{equation}\label{eq:C1}
 F=0,\quad\text{that is}\quad q_s^2=\kappa qE,
 \qquad\text{and}\qquad F_s=0 .
\end{equation}
Then $q_s\neq0$ at $p$, and with
\begin{equation}\label{eq:beta}
 \beta:=\frac{h'q_s}{hE}
\end{equation}
we have
\begin{equation}\label{eq:G}
 F_\nu(p)=\frac\kappa2\Bigl[(1+hq)(3+2hq)(1+\beta)+2q^2\Bigr].
\end{equation}
\end{lemma}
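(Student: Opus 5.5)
The plan is to use the two contact conditions \eqref{eq:C1} to eliminate $q_s^2$ and $q_{ss}$. Then I substitute the third-order identities of Lemma~\ref{lem:third} into \eqref{eq:Fnu}. Finally, every remaining odd occurrence of $h'$ should appear only through the combination $h'q_s=\beta hE$.

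\emph{Step 1: $q_s\neq0$.} By Lemma~\ref{lem:fluxest}, $E>0$ on $\partial\Om$. Also $\kappa>0$ and $q>0$. So the condition $q_s^2=\kappa qE$ forces $q_s^2>0$. In particular $\beta$ is well defined and $E>0$ may be used as a divisor.

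\emph{Step 2: geometric derivatives.} From $\kappa=h^3/(a^2b^2)$ and $d/ds=\kappa\,d/d\theta$ I get $h_s=\kappa h'$ and $\kappa_s=3\kappa^2h'/h$. Substituting into the formula for $E_s$ from the proof of Lemma~\ref{lem:third} gives
\[
E_s=\kappa h'q\Bigl(1-\frac{3\kappa}{h}\Bigr)+(h-\kappa)q_s .
\]

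\emph{Step 3: solving for $q_{ss}$.} Differentiating $F=\kappa qE-q_s^2$ along $s$ and setting $F_s=0$ gives
\[
2q_sq_{ss}=\kappa_sqE+\kappa q_sE+\kappa qE_s .
\]
I divide by $2q_s$, which is legitimate by Step 1. I then rewrite each term using $\kappa qE=q_s^2$ and $h'q_s=\beta hE$. For instance,
\[
\frac{\kappa_sqE}{q_s}=\frac{3\kappa h'}{h}\cdot\frac{\kappa qE}{q_s}=\frac{3\kappa h'q_s}{h}=3\kappa\beta E .
\]
The term $\kappa^2h'q/q_s$ is handled the same way: $\kappa q/q_s=q_s/E$, so it equals $\kappa h'q_s/E=\kappa\beta h$. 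This should express $q_{ss}$ as a polynomial in $\kappa,h,q,E,\beta$, with $h'$ and $q_s$ no longer appearing separately.

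\emph{Step 4: the normal derivative.} I insert this $q_{ss}$ into \eqref{eq:uTTn}. I then insert \eqref{eq:uTTn}, \eqref{eq:uTnn3} and \eqref{eq:unnn3} into \eqref{eq:Fnu}. The products that occur are
\[
2q_s\,u_{T\nu\nu}=2(\kappa_s-\kappa h')qq_s-2(h-3\kappa)q_s^2
\qquad\text{and}\qquad
\kappa q\,h'q_s=\kappa q\beta hE .
\]
In the first, $\kappa_sqq_s$ and $\kappa h'qq_s$ are again multiples of $h'q_s$, hence of $\beta$. Every $q_s^2$ becomes $\kappa qE$. After these substitutions $F_\nu$ is a polynomial in $\kappa,h,q,E,\beta$, and it is linear in $\beta$.

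\emph{Step 5: eliminating $E$ and factoring.} I replace $E=1+(h-\kappa)q$. I expect the $\kappa^2$ and $\kappa^3$ terms to cancel, leaving an overall factor $\kappa$. The target is the stated factored form \eqref{eq:G}, and I would verify it by comparing the coefficients of $\beta^0$ and $\beta^1$ separately, as polynomials in $q$, $h$ and $\kappa$.

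The main obstacle is this bookkeeping: confirming that all $\kappa$-dependence beyond the prefactor cancels. If it does not cancel directly, the fallback is to use Lemma~\ref{lem:Sident} (together with $q_s^2=\kappa qE$) to trade $\kappa$ for $h$, $h'^2$ and $S$. Lemma~\ref{lem:Sident} is exactly the identity under which $\kappa$ cancelled in the proof of the flux estimate, Lemma~\ref{lem:fluxest}, so it is the natural substitute here.
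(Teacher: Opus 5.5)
Your proposal is correct and follows the paper's proof essentially step for step. The paper also uses $q_s^2=\kappa qE$ and $h'q_s=\beta hE$ to obtain $q_{ss}=\frac\kappa2\bigl[2E-1+\beta(4E-2\kappa q-1)\bigr]$, reduces the third derivatives to $u_{TT\nu}=-\frac\kappa2(4E-2\kappa q-1)(1+\beta)$ and $u_{T\nu\nu}=(3\kappa-h)q_s(1+\beta)$, and then factors. The $\kappa$-dependence cancels directly once you use $E+\kappa q=1+hq$, since the bracket becomes $2(E+\kappa q)^2+(E+\kappa q)$, so your fallback via Lemma~\ref{lem:Sident} is not needed.
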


We postpone the proof of Lemma~\ref{lem:contact} and first show how
it is used to prove that a zero eigenvalue of $D^2u$, which cannot occur
at an interior point by Lemma~\ref{lem:interior}, cannot occur on the
boundary either.

\begin{proposition}[No boundary contact]\label{prop:touching}
If $D^2u\leq0$ on $\partial\Om$, then $F>0$ on $\partial\Om$, that is,
$D^2u<0$ on $\partial\Om$.
\end{proposition}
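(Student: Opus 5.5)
We argue by contradiction. Assume $D^2u\leq0$ on $\partial\Om$ and $F=0$ at some boundary point. By Theorem~\ref{thm:criterion}, $F\geq0$ on $\partial\Om$, and by Lemma~\ref{lem:interior}, $D^2u<0$ in $\Om$, hence $F>0$ in $\Om$. Since $\partial\Om$ is compact and $F$ is smooth there, the minimum value $0$ of $F|_{\partial\Om}$ is attained at some $p=P(\theta^*)$.

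At $p$, $F$ restricted to the boundary curve has an interior minimum along arc length, so $F_s(p)=0$. Together with $F(p)=0$ this is exactly hypothesis \eqref{eq:C1} of Lemma~\ref{lem:contact}. Also, $F$ on $\overline\Om$ attains its minimum $0$ at the boundary point $p$, since $F>0$ inside. Hence the outward normal derivative satisfies $F_\nu(p)\leq0$: along the inward normal segment $p-t\nu$ the values are $\geq0=F(p)$. The contact identity \eqref{eq:G} then yields
\[
 0\geq F_\nu(p)=\frac\kappa2\Bigl[(1+hq)(3+2hq)(1+\beta)+2q^2\Bigr].
\]
Since $\kappa,h,q>0$, the term $2q^2$ is positive, so it suffices to show $1+\beta\geq0$ at $p$. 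That yields $F_\nu(p)>0$, a contradiction.

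The main step is bounding $\beta=h'q_s/(hE)$ from below, and here the flux estimate enters. At $p$, $q_s^2=\kappa qE$, so by Cauchy--Schwarz
\[
 |\beta|=\frac{|h'|\,|q_s|}{hE}=\frac{|h'|\sqrt{\kappa q}}{h\sqrt E}.
\]
Therefore $\beta^2=\kappa q h'^2/(h^2E)$, and Lemma~\ref{lem:fluxest} gives $E-\kappa q h'^2/h^2\geq q/h>0$. This says $\kappa q h'^2/h^2<E$, i.e.\ $\beta^2<1$. So $1+\beta>0$, which gives the strict positivity of $F_\nu(p)$ even without the $2q^2$ term.

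The remaining care is in verifying the preconditions. First, $F$ is $C^1$ up to the boundary, which holds because $u\in C^\infty(\overline\Om)$. Second, the normal derivative in \eqref{eq:Fnu} is that of $\det D^2u$ extended to $\overline\Om$, in the fixed frame, which is the object whose sign the minimum forces. The only genuine obstacle, the sign of $1+\beta$, is handled by the cancellation of $\kappa$ in Lemma~\ref{lem:fluxest}. We conclude that $F>0$ on $\partial\Om$. Combined with $u_{TT}<0$, this gives $D^2u<0$ on $\partial\Om$.
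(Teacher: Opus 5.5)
Your proof is correct and follows the paper's argument essentially step for step. You get $F\ge0$ on $\overline\Om$ from Lemma~\ref{lem:interior}, use $F_s(p)=0$ to invoke Lemma~\ref{lem:contact}, get $\beta^2<1$ from the flux estimate \eqref{eq:fluxest} and hence $F_\nu(p)>0$, and this contradicts $F_\nu(p)\le0$ at a boundary minimum. (The step you label Cauchy--Schwarz is only squaring $\beta$ and substituting $q_s^2=\kappa qE$, which is harmless.)
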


\begin{proof}
By Lemma~\ref{lem:interior}, $D^2u\leq0$ on $\overline\Om$, so
$F=\det D^2u\geq0$ on $\overline\Om$.  Suppose that $F=0$ at some
$p\in\partial\Om$.  Since $F\geq0$ on $\partial\Om$, also $F_s(p)=0$,
so \eqref{eq:C1} holds and Lemma~\ref{lem:contact} applies.

By \eqref{eq:C1} and the flux estimate \eqref{eq:fluxest}, at $p$
\[
 \beta^2=\frac{h'^2\,\kappa qE}{h^2E^2}
 =\frac{\kappa q\,h'^2}{E\,h^2}
 =1-\frac1E\Bigl(E-\kappa q\,\frac{h'^2}{h^2}\Bigr)<1 ,
\]
so $1+\beta>0$ and \eqref{eq:G} gives $F_\nu(p)>0$.  On the other hand,
$F(p-t\nu)\geq0$ for small $t>0$ and $F(p)=0$, so $F_\nu(p)\leq0$.
This contradiction proves the claim.
\end{proof}

\begin{proof}[Proof of Lemma~\ref{lem:contact}]
All boundary quantities are evaluated at $\theta^*$. Since \(q>0\) by the Hopf lemma, \(\kappa>0\) by \eqref{eq:curvature}, and \(E>0\) by Lemma \(\ref{lem:fluxest}\), the contact relation \eqref{eq:C1} gives \(q_s^2=\kappa qE >0\). This  implies \(q_s\ne0\).  The definition \eqref{eq:beta} of $\beta$ and $q_s^2=\kappa qE$ give
\begin{equation}\label{eq:rbeta}
 \kappa q\,\frac{h'}{h}=\beta\,q_s .
\end{equation}
Below, $h'$ appears only in the combinations $h'q_s$ and $\kappa qh'/h$,
which \eqref{eq:beta} and \eqref{eq:rbeta} express through $\beta$.

\emph{The second derivative of the flux.}  By \eqref{eq:curvature},
$\kappa'=3h^2h'/(a^2b^2)=3\kappa h'/h$, so $\kappa_s=\kappa\kappa'=3\kappa^2h'/h$,
and differentiating $E=1+(h-\kappa)q$ along the boundary gives
$E_s=(\kappa h'-\kappa_s)q+(h-\kappa)q_s$.  Differentiating
$F=\kappa qE-q_s^2$ along the boundary,
\[
 F_s=\kappa_sqE+\kappa q_sE+\kappa qE_s-2q_sq_{ss}
 =\kappa q_s\bigl[E+(h-\kappa)q\bigr]
 +\kappa_sq\,(E-\kappa q)+\kappa^2h'q^2-2q_sq_{ss} .
\]
Here $E+(h-\kappa)q=2E-1$, and with $\kappa_s=3\kappa^2h'/h$ and
$hq=E+\kappa q-1$,
\[
 \kappa_sq\,(E-\kappa q)+\kappa^2h'q^2
 =\frac{\kappa^2h'q}{h}\bigl[3(E-\kappa q)+hq\bigr]
 =\frac{\kappa^2h'q}{h}\,(4E-2\kappa q-1)
 =\kappa\beta q_s\,(4E-2\kappa q-1)
\]
by \eqref{eq:rbeta}.  The condition $F_s=0$ therefore gives
\begin{equation}\label{eq:C2}
 q_{ss}=\frac\kappa2\bigl[2E-1+\beta\,(4E-2\kappa q-1)\bigr].
\end{equation}

\emph{Third derivatives at $p$.}  Inserting \eqref{eq:C2} into
\eqref{eq:uTTn},
\begin{align*}
 u_{TT\nu}=-q_{ss}+\kappa(\kappa q-E)
 &=-\frac\kappa2\bigl[(2E-1)-2(\kappa q-E)\bigr]
 -\frac\kappa2\,\beta\,(4E-2\kappa q-1)\\
 &=-\frac\kappa2(4E-2\kappa q-1)(1+\beta).
\end{align*}
In \eqref{eq:uTnn3}, $\kappa_s-\kappa h'=(3\kappa-h)\,\kappa h'/h$, so by
\eqref{eq:rbeta}
\[
 u_{T\nu\nu}=(3\kappa-h)\,\kappa q\,\frac{h'}{h}-(h-3\kappa)\,q_s
 =(3\kappa-h)\,q_s\,(1+\beta).
\]
By \eqref{eq:unnn3} and $h'q_s=hE\beta$,
\[
 u_{\nu\nu\nu}=-q-hE(1+\beta)-u_{TT\nu}.
\]

\emph{The normal derivative of $F$.}  By \eqref{eq:Fnu} and
$q_s^2=\kappa qE$,
\begin{align*}
 F_\nu
 &=-E\,u_{TT\nu}-\kappa q\bigl(-q-hE(1+\beta)-u_{TT\nu}\bigr)
 +2q_s\,u_{T\nu\nu}\\
 &=-(E-\kappa q)\,u_{TT\nu}+\kappa q^2+\kappa qhE(1+\beta)
 +2(3\kappa-h)\,\kappa qE\,(1+\beta)\\
 &=\frac\kappa2\Bigl[(E-\kappa q)(4E-2\kappa q-1)+2(6\kappa-h)\,qE\Bigr](1+\beta)
 +\kappa q^2 .
\end{align*}
With $hq=E+\kappa q-1$,
\begin{align*}
 (E-\kappa q)(4E-2\kappa q-1)+2(6\kappa-h)\,qE
 &=(E-\kappa q)(4E-2\kappa q-1)+12\kappa qE-2E(E+\kappa q-1)\\
 &=2(E+\kappa q)^2+(E+\kappa q)
 =(1+hq)(3+2hq).
\end{align*}
Inserting this last equality in the formula for $F_{\nu}$ gives \eqref{eq:G}.
\end{proof}

\subsection{Deformation}

\begin{theorem}[Concavity]\label{thm:ellipse}
The torsion function of every ellipse centered at the origin satisfies
$D^2u<0$ on $\overline\Om$, in particular it is concave.
\end{theorem}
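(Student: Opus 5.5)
We run a continuity argument in the parameter $\tau\in[b,a]$ along the family $\Om_\tau=\{x^2/\tau^2+y^2/b^2<1\}$, with torsion functions $u_\tau$. By rotation invariance of $\Lou$ it suffices to treat $\Om=\Om_a$ with $a\ge b$. Let
\[
I=\{\tau\in[b,a]:\ D^2u_\tau<0\ \text{on}\ \overline{\Om_\tau}\}.
\]
We show that $I$ is nonempty, open in $[b,a]$, and closed, hence $I=[b,a]$. Nonemptiness is Theorem~\ref{thm:ball}: on the disk $\Om_b=B_b$ we have $D^2u_b\le -\frac1n I<0$ on the closed disk.

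\emph{Continuity of the boundary Hessian.} The key technical input is that $\tau\mapsto D^2u_\tau$ is continuous up to the boundary. To avoid moving domains, pull back by the linear map $\Phi_\tau(x,y)=(\tau x/a,y)$, which sends $\Om_a$ onto $\Om_\tau$. Then $w_\tau=u_\tau\circ\Phi_\tau$ solves a uniformly elliptic equation on the fixed analytic domain $\Om_a$, with zero Dirichlet data and with coefficients depending smoothly on $\tau$. Global Schauder estimates \cite[Thm.~6.6]{GT}, together with uniqueness, give $w_\tau\to w_{\tau_0}$ in $C^{2,\alpha}(\overline{\Om_a})$ as $\tau\to\tau_0$. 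Consequently the function
\[
m(\tau):=\max_{\overline{\Om_\tau}}\lambda_{\max}\bigl(D^2u_\tau\bigr)
\]
is continuous on $[b,a]$.

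\emph{Openness and closedness.} Openness follows at once: if $m(\tau_0)<0$, then $m<0$ near $\tau_0$. For closedness, let $\tau_k\in I$ with $\tau_k\to\tau^*$. Then $m(\tau^*)\le0$, so $D^2u_{\tau^*}\le0$ on $\partial\Om_{\tau^*}$. Proposition~\ref{prop:touching}, applied to the ellipse $\Om_{\tau^*}$, upgrades this to $D^2u_{\tau^*}<0$ on $\partial\Om_{\tau^*}$. Lemma~\ref{lem:interior} then gives $D^2u_{\tau^*}<0$ in $\Om_{\tau^*}$, so $\tau^*\in I$. Hence $I=[b,a]$, and $\tau=a$ gives the theorem.

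The main obstacle is the uniform $C^2(\overline\Om)$ dependence on $\tau$. Interior convergence alone, as in Lemma~\ref{lem:stability}, would not suffice, because the contact point may lie on the boundary. The pullback reduction handles this: the transformed operator is $\partial_x(\frac{a^2}{\tau^2}\cdot)$-type in the $x$-direction plus lower-order terms with analytic coefficients, so the Schauder constants are uniform for $\tau$ in compact subsets of $(0,\infty)$.
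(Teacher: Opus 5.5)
Your proof is correct and follows the paper's argument: the same family of ellipses starting at the disk, a linear pullback to a fixed domain with uniform global Schauder estimates giving continuity of the Hessian up to the boundary in $\tau$, and closedness via Proposition~\ref{prop:touching} followed by Lemma~\ref{lem:interior}. The differences are cosmetic. You pull back to $\Om_a$ rather than $B_1$, and you track $\max\lambda_{\max}(D^2u_\tau)$ on the closure instead of $\min_{\partial\Om_\tau}\det D^2u_\tau$, which lets you skip Theorem~\ref{thm:criterion}. Uniform $C^{2,\alpha}$ bounds plus uniqueness give convergence only in $C^2(\overline{\Om_a})$ (by compactness), not in $C^{2,\alpha}$, but $C^2$ is all you need; the paper instead gets a Lipschitz bound in $C^{2,\alpha}$ from the equation for the difference.
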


\begin{proof}
Fix $b>0$ and $a\geq b$, and consider the family $\Om_\tau$ of ellipses
with semi-axes $(\tau,b)$, $\tau\in[b,a]$, with torsion functions
$u_\tau$, and let $F_\tau=\det D^2u_\tau$.

We first establish continuity in $\tau$. Define 
$A_\tau: B_1 \to \Omega_{\tau}$ by $ A_\tau z = (\tau z_1, b z_2)$ and let  $v_\tau(z)=u_\tau(A_\tau z)$.
The map $A_\tau$ transforms \eqref{eq:torsion} into
\[
 L_\tau v_\tau=-1\ \text{ in }B_1,
 \qquad v_\tau=0\ \text{ on }\partial B_1,
 \qquad
 L_\tau:=\tau^{-2}\partial^2_{z_1z_1}+b^{-2}\partial^2_{z_2z_2}-z\cdot\nabla .
\]
The operators $L_\tau$ are uniformly elliptic for $\tau\in[b,a]$, with
coefficients bounded in $C^{0,\alpha}(\overline{B_1})$ uniformly in
$\tau$.  The maximum principle \cite[Thm.~3.7]{GT} and the global
Schauder estimate on the fixed smooth domain $B_1$ \cite[Thm.~6.6]{GT}
give $\|v_\tau\|_{C^{2,\alpha}(\overline{B_1})}\leq C$ with $C$
independent of $\tau$.  For $\sigma,\tau\in[b,a]$, the difference
$w=v_\tau-v_\sigma$ satisfies
\[
 L_\tau w=(\sigma^{-2}-\tau^{-2})\,(v_\sigma)_{z_1z_1}\ \text{ in }B_1,
 \qquad w=0\ \text{ on }\partial B_1,
\]
and the right side has $C^{0,\alpha}(\overline{B_1})$ norm at most
$C|\tau-\sigma|$.  The same two estimates give
\[
 \|v_\tau-v_\sigma\|_{C^{2,\alpha}(\overline{B_1})}\leq C|\tau-\sigma| .
\]
Since $D^2u_\tau(A_\tau z)=A_\tau^{-1}D^2v_\tau(z)A_\tau^{-1}$,
\[
 F_\tau(A_\tau z)=\frac{\det D^2v_\tau(z)}{\tau^2b^2},
 \qquad
 \min_{\partial\Om_\tau}F_\tau=\frac{1}{\tau^2b^2}\,\min_{|z|=1}\det D^2v_\tau(z),
\]
and $\tau\mapsto\min_{\partial\Om_\tau}F_\tau$ is continuous on $[b,a]$.

Consider
\[
 \mathcal I=\Bigl\{\tau\in[b,a]:\ \min_{\partial\Om_\tau}F_\tau>0\Bigr\}.
\]

$\mathcal I$ is nonempty: $b\in\mathcal I$ because on the disk, $D^2u_b\leq-\tfrac12I$ by Theorem~\ref{thm:ball} so $F_b>0$. 

$\mathcal I$ is open in $[b,a]$ by continuity of $\tau\mapsto\min F_\tau$.

$\mathcal I$ is closed: let $\tau_j\in\mathcal I$, $\tau_j\to\tau^*$. By continuity,
$F_{\tau^*}\geq0$ on $\partial\Om_{\tau^*}$, so
$D^2u_{\tau^*}\leq0$ on $\partial\Om_{\tau^*}$ by
Theorem~\ref{thm:criterion}, and Proposition~\ref{prop:touching} gives
$F_{\tau^*}>0$ on $\partial\Om_{\tau^*}$. Hence $\tau^*\in\mathcal I$.

$\mathcal I$ is nonempty, open and closed in $[b,a]$, so $\mathcal I=[b,a]$. In
particular $a\in\mathcal I$. Then $D^2u_a<0$ on $\partial\Om_a$ by
Theorem~\ref{thm:criterion}, hence $D^2u_a<0$ on $\overline{\Om_a}$
by Lemma~\ref{lem:interior}, which is the claim.
\end{proof}

\begin{proof}[Proof of Theorem \ref{thm:main-concave}]
Part (a) is Theorem~\ref{thm:ball}, and part (b) is
Theorem~\ref{thm:ellipse}, using the rotation invariance noted in
Section~\ref{sec:setup}.
\end{proof}

\section{Research process}

The domains of Theorem~\ref{thm:main-nonconcave} and the estimates of
Sections~\ref{sec:criterionsec} and identities of Section~\ref{sec:touching} were found with the help of the language
model Claude Fable 5.0, through symbolic calculations, numerical searches
for counterexamples, and model-assisted exploration and review of
proofs. The deformation and equality-case proof of
Theorem~\ref{thm:main-concave} was inspired by the proofs of
log-concavity for first eigenfunctions \cite{SWYY85,LW87} and of the
constant rank theorems \cite{CF85,KL87}. The paper was cross-verified by GPT-6 Astra. None of these calculations or model outputs is used as
evidence in the proofs above. The authors have checked all proofs, edited the writing carefully, and
take full responsibility for the manuscript.

\end{document}